\title{\textbf{Notes on regularity of Anosov splitting}\vspace{-0.5em}}
\date{}
\author{Robert Koirala\vspace{-0.5em}}
\begin{document}

\maketitle
\vspace{-4em}
\begin{abstract}
    In these expository notes, we give a proof of regularity of Anosov splitting for Anosov diffeomorphisms in a torus. We also generalize the idea to higher dimensions and to Anosov flows.
\end{abstract}

\section{Introduction}
A dynamical system refers to an iteration of a map from a space to itself. The system is hyperbolic if any two orbits of the map diverge exponentially either in the past and/or the future. For instance, consider the action of $A=\begin{pmatrix}2&0\\0&2^{-1}\end{pmatrix}$ on a plane. In a hyperbolic system, we can characterize the set of points whose orbits remain close to the orbit of a point $x$ in the past (future) as an immersed submanifold of a Euclidean space. The submanifold is called an unstable (stable) manifold of $x$. In the case of $A,$ the stable manifold at the origin corresponds to the $y$ axis and the unstable to the $x$ axis. It turns out that the stable (unstable) manifolds are as regular as the map \cite{D}. We refer the reader to \cite{D} for an expository proof of the stable/unstable manifold theorem and \cite{KH} for a detailed account of hyperbolic dynamics.

In fact, for a volume preserving hyperbolic map, the unstable (stable) manifolds form an unstable (stable) foliation which is almost $C^2$ regular (\cite{H89}). To prove regularity, it often requires some bounds on the rates of divergence of the orbits. On the other hand, high regularity in these settings implies rigidity of the map \cite{H89}. However, regularity of the foliation for maps without bounds on the rates of divergence is still unanswered.

In these notes, we prove the almost $C^{2}$ regularity of the unstable (stable) foliation in two dimensions following the general case in \cite{H89} and \cite{HK}. In particular, we will prove the following theorem:

\begin{thm}\label{mainthm}
For any $\alpha\in (0,2]$ and $\epsilon>0$, if $\varphi$ is an $\alpha$-bunched Anosov diffeomorphism of a torus $\mathbb{T}^2$ then the unstable (stable) foliation associated to the diffeomorphism is $C^{\alpha-\epsilon}$ regular.
\end{thm}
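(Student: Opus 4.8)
The plan is to realise $E^{u}$ as the attracting fixed section of a graph transform acting on tangent line fields, and then to track how that transform behaves not just in the $C^{0}$ metric (which gives continuity of $E^{u}$) but on Hölder seminorms. First I would reduce the statement about the foliation to one about the distribution: by the unstable manifold theorem the leaves of $\mathcal W^{u}$ are $C^{r}$ curves, as regular as $\varphi$, and for a foliation with uniformly smooth leaves its regularity coincides with that of its tangent line field $x\mapsto E^{u}(x)\in\mathbb P(T_{x}\mathbb T^{2})\cong\mathbb{RP}^{1}$ (standard, see \cite{H89,HK}); so it suffices to prove $x\mapsto E^{u}(x)$ is $C^{\alpha-\epsilon}$. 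This line field is pinned down by the invariance $D\varphi_{x}(E^{u}(x))=E^{u}(\varphi x)$. Writing $\Psi_{x}:=\mathbb P(D\varphi_{x})$ for the induced Möbius action on $\mathbb{RP}^{1}$, whose attracting fixed direction is $E^{u}(x)$ with multiplier $\|D\varphi|_{E^{s}(x)}\|/\|D\varphi|_{E^{u}(x)}\|<1$, the section $\sigma(x):=E^{u}(x)$ is the unique fixed point, among continuous sections valued in a fixed forward-invariant cone field around $E^{u}$, of the operator $(\mathcal T\sigma)(x):=\Psi_{\varphi^{-1}x}(\sigma(\varphi^{-1}x))$.

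Next I would redo the contraction estimate for $\mathcal T$ at the level of Hölder seminorms. In the sup metric $\mathcal T$ contracts with factor $\sup_{x}\|D\varphi|_{E^{s}(x)}\|/\|D\varphi|_{E^{u}(x)}\|<1$ for a narrow enough cone; the real content is the Hölder version. For $x$ close to $y$ I would write $(\mathcal T\sigma)(x)-(\mathcal T\sigma)(y)$ as $[\Psi_{\varphi^{-1}x}(\sigma(\varphi^{-1}x))-\Psi_{\varphi^{-1}x}(\sigma(\varphi^{-1}y))]+[\Psi_{\varphi^{-1}x}(\sigma(\varphi^{-1}y))-\Psi_{\varphi^{-1}y}(\sigma(\varphi^{-1}y))]$. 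The first bracket is at most $\mathrm{Lip}(\Psi_{\varphi^{-1}x})\,[\sigma]_{\beta}\,d(\varphi^{-1}x,\varphi^{-1}y)^{\beta}$, and since $\varphi^{-1}$ distorts distances near $\varphi^{-1}x$ by at most $\approx\|D\varphi|_{E^{s}(\varphi^{-1}x)}\|^{-1}$ in the worst (stable) direction, this is bounded by $\big(\sup_{x}\|D\varphi|_{E^{s}(x)}\|^{1-\beta}/\|D\varphi|_{E^{u}(x)}\|\big)\,[\sigma]_{\beta}\,d(x,y)^{\beta}$. The second bracket measures how the Möbius map depends on its base point; since $\varphi\in C^{2}$ and the cone keeps $\sigma$ in a compact set it is $O(d(x,y))$, contributing only a fixed constant to the $\beta$-seminorm so long as $\beta\le 2$. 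Hence $[\mathcal T\sigma]_{\beta}\le q_{\beta}\,[\sigma]_{\beta}+C$ with $q_{\beta}:=\sup_{x}\|D\varphi|_{E^{s}(x)}\|^{1-\beta}/\|D\varphi|_{E^{u}(x)}\|$, and $q_{\beta}$ is exactly the $\beta$-bunching constant: automatically $<1$ when $\beta\le1$, and $<1$ for $1<\beta\le 2$ precisely under the $\alpha$-bunching hypothesis. Iterating $\mathcal T$ from a smooth section keeps the $C^{\beta}$-seminorm uniformly bounded, so by Arzela--Ascoli together with uniqueness of the fixed point, $E^{u}\in C^{\beta}$ for every $\beta<\alpha$; this handles $\beta=\alpha-\epsilon$ when $\alpha\le1$.

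For $\alpha\in(1,2]$ I would first apply the scheme with $\beta=1$ — the estimate then carries the factor $\sup_{x}\|D\varphi|_{E^{u}(x)}\|^{-1}<1$ — to obtain that $E^{u}$ is Lipschitz, then represent $E^{u}$ locally as the graph of a Lipschitz slope function $\omega$ and differentiate $\omega(\varphi x)=\Psi_{x}(\omega(x))$ along the stable direction. Using $\varphi\in C^{2}$, this produces a fixed point equation for $\psi:=D_{E^{s}}\omega$ whose associated transform has fibre multiplier $\|D\varphi|_{E^{u}(x)}\|^{-1}$ and stable base expansion $\|D\varphi|_{E^{s}(x)}\|^{-1}$ (and a continuous inhomogeneous term built from $D^{2}\varphi$); its $C^{0}$-contraction $\sup_{x}\|D\varphi|_{E^{u}(x)}\|^{-1}<1$ already upgrades $E^{u}$ to $C^{1}$, and running the Hölder estimate of the previous paragraph for it with exponent $\beta-1\in(0,1]$ gives contraction factor $\sup_{x}\|D\varphi|_{E^{u}(x)}\|^{-1}\|D\varphi|_{E^{s}(x)}\|^{-(\beta-1)}=q_{\beta}$, once more the $\beta$-bunching constant. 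Under the hypothesis it is $<1$, so $\psi\in C^{\beta-1}$, hence $\omega\in C^{\beta}$ and $E^{u}\in C^{\beta}$ for $\beta=\alpha-\epsilon$. Finally, applying all of the above to $\varphi^{-1}$ — whose unstable foliation is the stable foliation of $\varphi$, and which is $\alpha$-bunched iff $\varphi$ is — settles the stable foliation.

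I expect the main obstacle to be the bookkeeping in the Hölder estimate of the second paragraph: showing that the factor $\mathcal T$ contributes is the bunching constant $\sup_{x}\|D\varphi|_{E^{s}(x)}\|^{1-\beta}/\|D\varphi|_{E^{u}(x)}\|$, and not something governed by a crude global Lipschitz bound on $\varphi^{-1}$. This forces one to compare the values of $\sigma$ at $\varphi^{-1}x$ and $\varphi^{-1}y$ only along stable leaves, where the distortion is controlled by $\|D\varphi|_{E^{s}}\|^{-1}$, and to absorb the nonlinearity of $\varphi$ into second-order terms that are $O(d(x,y)^{2})=o(d(x,y)^{\beta})$ for $\beta<2$ — which is exactly why $\alpha\le 2$ is the natural ceiling for this one-differentiation argument — while also making the cone-field/section-space set-up precise (local trivialisations, invariance of the space of admissible Hölder sections under $\mathcal T$, and convergence in the right topology). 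Everything else is the standard Banach fixed point and Arzela--Ascoli machinery.
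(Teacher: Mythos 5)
Your strategy is sound and reaches the theorem, but it is organized genuinely differently from the paper. You phrase everything as an invariant--section problem: $E_u$ is the unique fixed point of the graph transform $\mathcal T$ on sections of the projectivized tangent bundle, you track the H\"older seminorm under one application of $\mathcal T$ to get $[\mathcal T\sigma]_\beta\le q_\beta[\sigma]_\beta+C$ with $q_\beta$ the bunching ratio, and you treat the $C^1$ step by a (formal) derivative transform with fibre multiplier $\|D\varphi|_{E^u}\|^{-1}$, then repeat the H\"older estimate for the derivative. This is essentially the classical Hasselblatt / Hirsch--Pugh--Shub route \cite{H89,HK}. The paper instead argues pointwise: it conjugates along the orbit so the base point is fixed with diagonal differential, straightens $W_u$ and $W_s$, and shows that the dynamics propagates bounds of the form $\verts{\theta_v(y)}\le K\verts{y}^{\alpha-\epsilon}$ for $y$ on the stable manifold (Lemma \ref{proofofhc}, Proposition \ref{limitofv}), so H\"older continuity at the point follows from convergence of $\mathcal T_{nN}v$ to $E_u$; differentiability is obtained not by constructing the derivative as an invariant object but by propagating the symmetric difference-quotient inequality of Lemma \ref{lemdiff} (criterion \eqref{differentiation}), and H\"older continuity of the derivative by splitting the slope into linear and nonlinear parts. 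Your formulation makes the role of the bunching constant $q_\beta$ very transparent and is global from the start; the paper's is more hands-on, never differentiates an object not yet known to be differentiable, and localizes via charts along the orbit, which is what you defer to ``making the section-space set-up precise.''

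Two steps of your sketch need repair. First, the base-dependence term in your H\"older estimate is $O(d(x,y))$, and such a term is absorbed into a $\beta$-seminorm only for $\beta\le 1$; your clause ``so long as $\beta\le 2$'' is wrong as stated (a two-point $\beta$-H\"older bound with $\beta>1$ would force constancy). This does not sink the proof, since you only use that estimate with exponents in $(0,1]$ and handle $\alpha\in(1,2]$ by the derivative bootstrap, but the claim should be corrected. Second, in that bootstrap you ``differentiate $\omega(\varphi x)=\Psi_x(\omega(x))$ along the stable direction'' while $\omega$ is at that stage only known to be Lipschitz; you must instead introduce the transform that the derivative would satisfy and invoke the fibre contraction theorem (or show that the stable-direction derivatives of the smooth iterates $\mathcal T^n\sigma_0$ converge uniformly, hence the limit is $C^1$ with that derivative) before you may identify $\psi$ with $D_{E^s}\omega$; the paper avoids exactly this issue via \eqref{differentiation}. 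A minor further point: your contraction factors should be written with conorms (or after passing to a high iterate or an adapted metric), since the paper's bunching hypothesis is asymptotic in $\kappa,\lambda$ rather than a one-step pointwise bound; in two dimensions with one-dimensional bundles this is cosmetic.
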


Unless otherwise stated, diffeomorphisms in these notes mean Anosov diffeomorphisms. 

To get a feeling for the theorem, we refer the reader to Figure \ref{fig:numerical}. Note that the diagonal entries of Arnold cat map $\begin{pmatrix}2&1\\1&1\end{pmatrix}$ satisfy $0<\lambda_1<1<\lambda_2$ which implies hyperbolicity. In fact, if we perturb the map, the hyperbolicity is preserved. Therefore, we get unstable and stable manifolds at each point for the perturbed map. And Theorem \ref{mainthm} means that the blue (red) manifolds in Figure \ref{fig:numerical} vary regularly with respect to $x\in \mathbb{T}^2.$
\begin{figure}[h]
    \centering
    \includegraphics[width=0.45\textwidth]{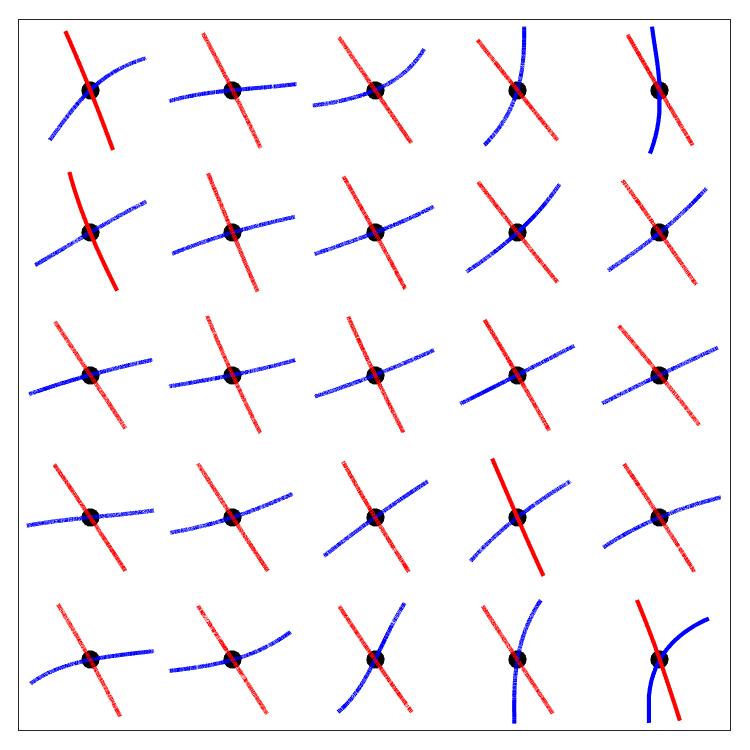}
    \caption{Numerically computed unstable (blue) and stable (red) manifolds for a perturbed Arnold cat map for different $x\in \mathbb{T}^2=\R^2/\Z^2$ \cite{D21}.}
    \label{fig:numerical}
\end{figure}

The outline of the notes is as follows:
\begin{itemize}
    \item In \S\ref{sec2}, we will focus on regularity of the unstable foliation $W_u$ of a torus. We can reverse the time to get a proof for the stable foliation. Since regularity is a local property, we will focus on a neighborhood of a point. Further, understanding how the tangent spaces $T_xW_u(x)$ of an unstable manifold at $x$ vary with $x$ suffices to understand regularity of the foliation. The gist of the proof in a general setting is same as that at a point in a torus. 
    
    The three main ingredients in the proof of the regularity theorem are:
    \begin{itemize}
        \item\textbf{ H\"older continuity} (\S\ref{holderc}): If we think of a torus $\mathbb{T}^2$ as a quotient of $\R^2$, the tangent spaces $T_xW_u(x)$ for all $x\in \mathbb{T}^2$ form an unstable vector field. We define a space $V(\delta)$ of vector fields that are $\delta$-close to the unstable vector field. Under the action of a diffeomorphism in positive time, any vector field in $V(\delta)$ converges to the unstable vector field. Now the idea is to prove that the action of the map preserves H\"older continuity for all vector fields in $V(\delta)$. And using a limiting argument, we can prove that the limiting distribution is also regular.
        
        \item \textbf{Differentiability} (\S\ref{diff}): To prove the differentiability, one might think that we have to fix a connection and define differentiation. However, we can get around with it by using the definition of differentiation as a limit of difference quotient. To get the limiting parameter, the idea is to exploit the fact that a diffeomorphism contracts stable manifolds.
        
        \item\textbf{ H\"older continuity of the derivative} (\S\ref{holdercd}): Again, the idea is to prove that the action of a diffeomorphism preserves H\"older continuity of the derivatives.
    \end{itemize}
    \item In \S\ref{general}, we will outline the proof in higher dimensions. Finally, we will comment on how the proof generalizes to Anosov flows.
\end{itemize}

We write $\N$ to be the set of non-negative integers, $\Z_+$ the set of positive integers and $\floor{x}$ the integer part of $x.$ For two functions $f$ and $g$ by $f(x)=\mathcal{O}(g(x))$ we mean $\verts{f(x)}\leq cg(x)$ where $c$ is a non-negative constant. Further, by $f(x)=o(g(x))$ we mean $\frac{f(x)}{g(x)}\to 0$ when $x$ approaches to $0.$

Unless otherwise stated, a manifold is a torus $\mathbb{T}^2$. In particular, consider an $\ell_\infty$ unit ball centered at the origin:
\begin{align}
    \bar{B}_{\infty}(0,1)=\{(x_1,x_2)\in \R^2:\max(\verts{x_1},\verts{x_2})\leq 1\}.
\end{align}
Then $\mathbb{T}^2$ is obtained by identifying the edges of $\bar{B}_{\infty}(0,1)$: $(-1,x_2)$ with $(1,x_2)$ and $(x_1,1)$ with $(x_1,-1).$ We endow $\mathbb{T}^2$ with the metric $\norm{\cdot}$ induced from $\R^2.$ Note that the choice of a metric is irrelevant because all metrics on a compact manifold are commensurate.


\section{Regularity in two dimensions}\label{sec2}
In this section, we prove regularity of Anosov splitting in two dimensions with some simplification. First, let us start with the definition of Anosov diffeomorphism with a `bunching condition.' Since we don't explicitly use the Definitions \ref{anosovdiff} and \ref{bunching}, the reader can skip them and come back when needed.

\begin{defn}[\cite{D}, \cite{H89}]\label{anosovdiff}
A volume preserving diffeomorphism $\varphi$ on a torus $\mathbb{T}^2$ is called \textit{Anosov} with \textit{Anosov splitting} $(E_u,E_s)$ if the tangent bundle $T\mathbb{T}^2$ splits into a direct sum $E_u\oplus E_s$ where $E_u$ and $E_s$ are one dimensional subbundles of $T\mathbb{T}^2$ and:
\begin{itemize}[noitemsep]
    \item $E_u$ and $E_s$ are invariant under the differential $d\varphi$:
    \begin{align*}
        d\varphi(x)E_u(x)=E_u(\varphi(x)),~  d\varphi(x)E_s(x)=E_s(\varphi(x)) \text{ for all } x\in \mathbb{T}^2.
    \end{align*}
    \item The iterates of $\varphi$ in the past are contracting in $E_u$ while in the future in $E_s$. In other words, there are constants $C>0$ and $0<\kappa<\lambda <1$ such that for all $u\in E_u(x), ~v\in E_s(x),  ~x\in \mathbb{T}^2,$ and $n\in \N:$
    \begin{subequations}
    \begin{align}
        \frac{1}{C}\kappa^n \norm{u}\leq \norm{d\varphi^{-n}(x)u}&\leq C\lambda^n\norm{u} \text{ and } \label{negiterate}\\
        \frac{1}{C}\kappa^n \norm{v}\leq \norm{d\varphi^{n}(x)v}&\leq C\lambda^n\norm{v}.\label{positerate}
    \end{align}
    \end{subequations}
\end{itemize}
\end{defn}

\begin{remark}
Note that the contraction properties in equations \eqref{negiterate} and \eqref{positerate} are independent of the metric but $C$ depends on it.
\end{remark}

\begin{defn}\label{bunching}
For $\alpha\in (0,2],$ we call a point $x\in \mathbb{T}^2$ \textit{$\alpha$-bunched} if there exist bunching constants $0<\kappa<\lambda<1$ that provide the bounds of contraction rate in equations \eqref{negiterate} and \eqref{positerate} and $\lambda^{2/\alpha}\leq \kappa$. A diffeomorphism $\varphi$ is \textit{$\alpha$-bunched} if $\sup_{x\in \mathbb{T}^2} \lambda^2\kappa^{-\alpha}<1.$
\end{defn}

\begin{defn}[\cite{H89}]
A function $f:\R\to \R$ is called \textit{$\beta$-H\"older} at a point $x_0$ if $f$ is $\floor{\beta}$ times differentiable and its $\floor{\beta}^{th}$ derivative is H\"older continuous at $x$ with H\"older exponent $\beta-\floor{\beta}$. In particular, for some $\delta>0$ and all $\verts{x_0-x}\leq \delta$ there exits $K>0$ such that 
\begin{align*}
    \verts{f^{(\floor{\beta})}(x_0)-f^{(\floor{\beta})}(x)}\leq K \verts{x_0-x}^{\beta-\floor{\beta}}.
\end{align*}
\end{defn}
\begin{defn}
A $d$-dimensional \textit{distribution} $D$ on a manifold $M$ is a rank $d$ subbundle of the tangent bundle $TM.$
\end{defn}

It means that at every point $x\in M$ there exists a neighborhood $U\subset M$ of $x$ such that $D$ is spanned by vector fields $v_1,\dots, v_d$ that are linearly independent at every point of $U.$
\begin{defn}
A distribution is said to be \textit{$\beta$-H\"older} if it is generated by vector fields whose coeffiecients in local coordinates are $\beta$-H\"older.
\end{defn}

\begin{remark}
By regularity of the distribution $E_u$, we mean regularity of the map $x\mapsto E_u(x).$
\end{remark}

\subsection{H\"older continuity}\label{holderc}
For simplicity, consider $\varphi$ to be a volume preserving Anosov diffeomorphism
\begin{align*}
    \varphi: \mathbb{T}^2\to \mathbb{T}^2
\end{align*}
which satisfies
\begin{enumerate}[label=(\arabic*),noitemsep]
    \item  \label{fixpoint}$\varphi(0)=0$
    \item \label{22}the differential of the map at the origin is: 
    \begin{align}
        d\varphi(0)=\begin{pmatrix}2&0\\0&2^{-1}\end{pmatrix}.\label{differential} 
    \end{align}
\end{enumerate}

\begin{remark}
Note that $\varphi$ satisfies the Definition \ref{anosovdiff} and \ref{bunching}. However, we won't explicitly use the properties in the definitions before \S\ref{general}.
\end{remark}

In this subsection, we will prove the following statement:
\begin{prop}[\cite{H89}]\label{prop2.8}
Suppose $\alpha\in(0,1]$ and $\varphi$ is an Anosov diffeomorphism that satisfies \ref{fixpoint} and \ref{22}. Then, for any $\epsilon>0$, the unstable distribution is $(\alpha-\epsilon)$-H\"older continuous at the origin.
\end{prop}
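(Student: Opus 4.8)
The plan is to realise the unstable distribution $E_u$ as the unique attracting fixed point of a graph transform acting on line fields, to show that this transform preserves a H\"older class, and then to pass to the limit; this is in effect a special case of the H\"older section theorem for fibre-contracting bundle maps (cf.\ \cite{HK}), carried out along the lines announced in the introduction around the space $V(\delta)$. Identify $\mathbb{T}^2$ with $\R^2/\Z^2$, so each tangent space is $\R^2$; by \ref{fixpoint} the origin is fixed and by \ref{22} the line $E_u(0)$ is horizontal. As $E_u$ is continuous (the stable/unstable manifold theorem, \cite{D}), a line field $\delta$-close to $E_u$ near $0$ is the graph of a function of small slope, which I record as $\sigma_L\colon x\mapsto(\text{slope of }L(x))$; away from $0$, where $E_u$ need not be near-horizontal, one works with the intrinsic metric on the projectivised bundle $\mathbb{P}(T\mathbb{T}^2)$ or with finitely many charts, which does not affect the estimates below. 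Let $V(\delta)$ be the continuous line fields within $\delta$ of $E_u$, and $\varphi_*$ the pushforward $(\varphi_*L)(x)=d\varphi(\varphi^{-1}x)\,L(\varphi^{-1}x)$, which in the slope description reads $\sigma_{\varphi_*L}(x)=F\bigl(d\varphi(\varphi^{-1}x),\,\sigma_L(\varphi^{-1}x)\bigr)$ for a M\"obius-type rational function $F$ that is smooth on the region of interest.

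First I would record two standard consequences of hyperbolicity: for $\delta$ small, $\varphi_*(V(\delta))\subset V(\delta)$, and $\varphi_*^{\,n}L\to E_u$ uniformly for every $L\in V(\delta)$; both say that $E_u$, as a section of $\mathbb{P}(T\mathbb{T}^2)$, is invariant and normally attracting, since by \eqref{negiterate}, \eqref{positerate} and volume-preservation the fibrewise derivative of $d\varphi$ along $E_u$ has norm comparable to the ratio of stable contraction to unstable expansion, which is $<1$. The heart of the argument is then to fix $\beta=\alpha-\epsilon\in(0,1)$ (so $\floor{\beta}=0$) and show that the set $V(\delta,K,\beta)\subset V(\delta)$ of line fields whose slope function is $\beta$-H\"older with constant $\le K$ is invariant under $\varphi_*^{\,N}$ for $\delta$ small, $N$ large and $K$ large. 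Writing $p=\varphi^{-N}x$, $p'=\varphi^{-N}x'$ and splitting $\sigma_{\varphi_*^{N}L}(x)-\sigma_{\varphi_*^{N}L}(x')$ into a term in which $d\varphi^{N}$ varies from $p$ to $p'$ and a term in which $\sigma_L$ varies from $p$ to $p'$, the first is a distortion term, bounded by $c_N\,|x-x'|^\beta$ with $c_N$ independent of $K$ (since $F$ and $d\varphi^{N}$ are Lipschitz), while the second is a contraction term, bounded by $\Theta_N\,K\,|x-x'|^\beta$, where $\Theta_N$ is the fibrewise contraction of $\varphi_*^{N}$ (of order $S_N/U_N$, with $S_N$ and $U_N$ the stable-contraction and unstable-expansion of $d\varphi^{N}$ along the orbit of $p'$) times $(\mathrm{Lip}\,\varphi^{-N})^\beta$ (of order $S_N^{-\beta}$). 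Using volume-preservation to write $U_N$ comparable to $S_N^{-1}$,
\[
\Theta_N\ \text{ is of order }\ \frac{S_N}{U_N}\,S_N^{-\beta}\ \ \text{comparable to}\ \ S_N^{2-\beta}\ \le\ c\,(C\lambda^{N})^{2-\beta},
\]
which is $<1$ once $N$ is large, because $\beta<2$ and $\lambda<1$; taking $K:=c_N/(1-\Theta_N)$ then gives the claimed invariance. To conclude I would replace $\varphi$ by $\varphi^{N}$ (the unstable field is unchanged), choose a smooth line field $L_0$ that is $\delta$-close to $E_u$ --- hence in some $V(\delta,K_0,\beta)$ --- observe that the H\"older constants of $\varphi_*^{\,nN}L_0$ stay bounded (they decrease geometrically to $\le K$) while $\varphi_*^{\,nN}L_0\to E_u$ uniformly, and use that a uniform limit of functions that are $\beta$-H\"older with a common constant is again $\beta$-H\"older with that constant; hence $\sigma_{E_u}$ is $\beta$-H\"older, in particular at the origin.

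The step I expect to be the main obstacle is the estimate of $\Theta_N$ --- the competition inside the contraction term. The pushforward genuinely contracts slopes (by the factor $S_N/U_N<1$), but the backward map $\varphi^{-N}$ expands distances in the base (by up to $S_N^{-1}$), and a priori these could conspire against us. What rescues the estimate is that a single quantity, the stable contraction $S_N$, governs both effects, so --- after eliminating the angle factor by volume-preservation, $U_N S_N\approx 1$ --- the net factor is of order $S_N^{2-\beta}$, which beats $1$ precisely because the exponent $2$ exceeds $\beta$. This is where the number $2$ enters (and, in the full Theorem \ref{mainthm}, the $\alpha$-bunching condition of Definition \ref{bunching}); for the present range $\alpha\le 1$ it costs nothing. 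A secondary nuisance is that $\varphi_*$ preserves no small neighbourhood of $0$, which forces the H\"older bound to be propagated with uniform constants over $\mathbb{T}^2$ rather than localised near the origin; the chart remark in the setup makes this harmless.
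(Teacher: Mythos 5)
Your proposal is sound, but it runs a genuinely different argument from the paper's. You prove the statement as a special case of the invariant-section (graph-transform) theorem: a \emph{global} two-point H\"older estimate for slope fields on all of $\mathbb{T}^2$, split into a distortion term (constant $c_N$ independent of $K$) plus a contraction term, with the key cancellation $\frac{S_N}{U_N}\cdot S_N^{-\beta}\sim S_N^{2-\beta}$ coming from area preservation, followed by uniform convergence of $\varphi_*^{nN}L_0$ to $E_u$. The paper instead works purely \emph{locally at the fixed point}: it straightens $W_u(0),W_s(0)$, expands $d\varphi^n(y)$ along the stable manifold as in \eqref{2.10} with the off-diagonal bound $\gamma_n(y)\le P(n)|y|$, compares $\theta_v(y)$ only with its value at the origin (Lemma \ref{proofofhc}), propagates the bound over a punctured neighbourhood using that $\varphi^n$-images of a fixed annulus in $W_s(0)$ cover it (Proposition \ref{limitofv}), passes to the limit (Corollary \ref{cor2.14}), and invokes smoothness of $W_u$ for the unstable direction; the same factor $2^{-2n}\cdot 2^{n(\alpha-\epsilon)}$ that drives your estimate appears there in \eqref{boundtransform}, and the role of your distortion term is played by the $\gamma_n$ term (which, being only Lipschitz, is also where $\alpha\le 1$ enters in both arguments). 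Your route buys more --- uniform H\"older continuity of $E_u$ on the torus, with no use of the fixed point or of the eigenvalues $2,2^{-1}$, and it is exactly the scheme that generalizes to the bunched setting of \S\ref{general} --- while the paper's buys explicit elementary constants and only the pointwise statement it needs. One loose end you should tighten: in your bound for $\Theta_N$ you treat $\mathrm{Lip}(\varphi^{-N})$ as being of order $S_N^{-1}$ with the \emph{same} $S_N$ that appears in the fibre contraction along the orbit of $p'$, but $\mathrm{Lip}(\varphi^{-N})$ is a global quantity of order $\sup_x S_N(x)^{-1}$, so with only the global constants of Definition \ref{anosovdiff} you would get $(\lambda^2\kappa^{-\beta})^N$, i.e.\ you would need bunching. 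The fix is routine but should be said: fix $N$ first, use uniform continuity of $d\varphi^{\pm N}$ to restrict to pairs with $|x-x'|\le\rho_N$, on which the expansion of $\varphi^{-N}$ is comparable to $S_N(p')^{-1}$ so the pointwise cancellation via $U_NS_N\asymp 1$ (angles uniformly bounded below) is legitimate, and absorb pairs with $|x-x'|>\rho_N$ into the choice of $K$ using boundedness of slopes in $V(\delta)$.
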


Since one dimensional distribution $v$ of $\mathbb{T}^2$ consists of lines at each point $x\in \mathbb{T}^2$, we can associate to the distribution a \textit{slope function} $\theta_{v}: \mathbb{T}^2\to \R$ that gives slope of the lines.  Note that we can choose a neighborhood of the origin where the unstable and stable manifolds are uniformly transverse. Therefore, the slope function is well-defined which would be false for vertical lines. Also, define
\begin{align*}
    \norm{v (x)}\coloneqq \verts{\theta_v(x)}.
\end{align*}

\begin{defn}
We say that two distributions $a$ and $b$ on $\mathbb{T}^2$ are \textit{$\delta$-close} to each other for $\delta>0$ if for all $x\in \mathbb{T}^2$ the slope functions $\theta_a$ and $\theta_b$ associated to $a$ and $b$ satisfy
\begin{align*}
    \verts{\theta_a(x)-\theta_b(x)}\leq \delta.
\end{align*}

\end{defn}

For $\delta>0$, define
\begin{align*}
    V(\delta)\coloneqq \{\text{one dimensional distributions on } \mathbb{T}^2 \text{ that are } \delta \text{-close to } E_u\}.
\end{align*}

For the $n^{th}$ iterate of $\varphi$, define an action $\mathcal{T}_n$ on $V(\delta)$ for all $x\in \mathbb{T}^2$ and $v\in V(\delta)$ as
\begin{align}
    (\mathcal{T}_nv)(x)\coloneqq d\varphi^n(p)(v(p))\label{transform}
\end{align}
where $p=\varphi^{-n}(x).$

\begin{remark}\label{rm2.10}
Note that $\mathcal{T}_n (V(\delta))\subset V(\delta)$ and $\mathcal{T}_nv$ approaches $E_u$ for $v\in V(\delta)$ as $n$ tends to infinity. In fact, let $p=\varphi^{-n}(x).$ Then the component of $v(p)$ in the stable direction $E_{u}(p)$ vanishes under the action of $d\varphi^n(p)$ by our assumption in equation \eqref{positerate}. Meanwhile, the component in the direction of $E_u(p)$ stays in unstable direction since
\begin{align*}
    d\varphi^n(p)E_u(p)=E_u(\varphi^n(p))=E_u(\varphi^n (\varphi^{-n}(x)))=E_{u}(x).
\end{align*}
\end{remark}

Corresponding to the Anosov diffeomorphism $\varphi$, let us call $W_s(x)$ and $W_u(x)$ to be the `local' stable and unstable manifolds at $x\in \mathbb{T}^2$. In fact, let $B_\sigma(x)$ be the set of points in $\mathbb{T}^2$ that are at distance $\sigma>0$ from $x\in \mathbb{T}^2$. Fix small $\sigma>0$, and consider the intersection of $W_u(x)$ and $W_s(x)$ with $B_\sigma(x)$ such that $W_u(x)\cap ~W_s(x)=\{x\}$ in the ball.  

For simplicity, assume that $W_u(0)$ and $W_s(0)$ are horizontal and vertical respectively since we can reduce a general case to this setting, see Remark \ref{rmkholder}. By an abuse of notation write $(0,y)\in W_s$ as $y$. Then we have the following statement:

\begin{lem}\label{proofofhc}
For any $\epsilon>0$, there exist $K>0$, $0<\epsilon_1<\epsilon,$ $\delta>0$, and $N\in \N$ such that if $v\in V(\delta)$, $y\in W_s(0)$ with $\verts{y}<\epsilon_1$ then for all $n\geq N$ with $z\coloneqq \varphi^n(y)$,
\begin{align}
    \norm{v(0,y)}<K\verts{y}^{\alpha-\epsilon}\implies \norm{(\mathcal{T}_nv)(z)}<K\verts{z}^{\alpha-\epsilon}.
\end{align}
\end{lem}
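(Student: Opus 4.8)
The plan is to track how the slope function of a distribution $v\in V(\delta)$ evolves under a single application of $\mathcal{T}_1$ and then iterate. Since we are working near the origin with $d\varphi(0)=\mathrm{diag}(2,2^{-1})$, in a small neighborhood the derivative $d\varphi(x)$ is a small perturbation of this diagonal matrix; write $d\varphi(x)=\begin{pmatrix}a(x)&b(x)\\ c(x)&d(x)\end{pmatrix}$ with $a\approx 2$, $d\approx 2^{-1}$, and $b,c$ small, all depending smoothly (hence Lipschitz, with bounded constants) on $x$. Applying $d\varphi(p)$ to a line of slope $s$ at $p$ produces a line of slope $\frac{c(p)+d(p)s}{a(p)+b(p)s}$ at $\varphi(p)$. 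The first step is to record this graph-transform formula for the slope and observe two things: (i) it contracts slopes toward the unstable slope at a definite rate — roughly a factor $\frac{d}{a}\approx\frac14$ near the origin, which is why $\mathcal{T}_n(V(\delta))\subset V(\delta)$; and (ii) its dependence on the base point $p$ is Lipschitz with a controlled constant, uniformly for $v\in V(\delta)$ and $p$ in the neighborhood.

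Next I would set up the iteration along the stable leaf. Given $y\in W_s(0)$ with $|y|$ small and $z=\varphi^n(y)$, write $p_k=\varphi^{-k}(z)=\varphi^{n-k}(y)$ for $0\le k\le n$, so $p_n=y$ and $p_0=z$. By the contraction estimate \eqref{positerate} on $E_s$ we have $|p_k|=|\varphi^{n-k}(y)|\le C\lambda^{\,n-k}|y|$, and comparably $|z|\asymp |\varphi^n(y)|$ is of order $\lambda^n|y|$ up to the constant $C$; in particular $|z|^{\alpha-\epsilon}$ and $\lambda^{n(\alpha-\epsilon)}|y|^{\alpha-\epsilon}$ are comparable. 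The quantity $(\mathcal{T}_nv)(z)$ is obtained by starting from the slope $\theta_v(p_n)$ at $p_n=y$ and applying the slope-graph-transform $n$ times along the points $p_{n-1},\dots,p_0=z$. I would prove by downward induction on $k$ a bound of the shape $|\theta_{(\mathcal{T}_kv)}(p_{n-k})-\theta_{E_u}(p_{n-k})|\le K' |p_{n-k}|^{\alpha-\epsilon}$, plus absorbing the "new" contribution coming from the variation of $E_u$ itself along the orbit. Concretely, each application of the transform multiplies the existing deviation by a factor $\le \mu<1$ (with $\mu\approx\lambda/\kappa\cdot(\text{something})$, controlled by the bunching inequality $\lambda^{2/\alpha}\le\kappa$, equivalently $(\lambda/\kappa)\le \lambda^{1-2/\alpha+1}=\dots$ — the point is that the contraction rate $\mu$ of slopes beats $\kappa^{\alpha-\epsilon}$, the rate at which $|p_k|^{\alpha-\epsilon}$ grows as $k$ decreases), and adds an error of size $\mathcal O(|p_{n-k+1}|)$ from the Lipschitz base-point dependence, which is $\ll |p_{n-k+1}|^{\alpha-\epsilon}$ once $|y|$ is small since $\alpha-\epsilon<1$. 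Summing the resulting geometric series over $k$ gives the bound at $k=n$, i.e.\ at $z$, with a constant that can be made $<K$ by choosing $\delta$ small, $\epsilon_1$ small, and $N$ large.

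The main obstacle is the bookkeeping of the two competing exponential rates: the deviation from $E_u$ contracts under $\mathcal{T}_1$ at rate $\mu$, while the target radius $|p_k|^{\alpha-\epsilon}$ \emph{shrinks} as $k$ increases (we move toward $z$, which is closer to $0$), so along the induction the "room" we are allowed is decreasing at rate $\kappa^{\alpha-\epsilon}$ to $\lambda^{\alpha-\epsilon}$ per step. The inequality we need is precisely that the contraction of slopes dominates $\lambda^{\alpha-\epsilon}$ (or the relevant rate), and this is where the $\alpha$-bunching hypothesis $\lambda^2\kappa^{-\alpha}<1$ enters: it guarantees that after accounting for the $2$ vs.\ $2^{-1}$ eigenvalue gap (which gives the slope-contraction rate essentially $\lambda/\kappa$ worth of margin, squared through the graph transform), the geometric series converges and the accumulated constant stays bounded by $K$. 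A secondary technical point is that $E_u$ is not literally horizontal away from $0$, so "deviation of $v$ from horizontal" and "deviation of $v$ from $E_u$" differ by $\theta_{E_u}(p_k)$, which itself we only know is $\mathcal O(|p_k|)$ a priori; handling this requires either bootstrapping (first prove Lipschitz, i.e.\ the $\alpha-\epsilon<1$ statement, using only $\theta_{E_u}=\mathcal O(|p_k|)$) or carrying the $E_u$-deviation through the same induction — I would do the former, which is exactly why the statement is phrased for $\alpha\in(0,1]$ with a loss of $\epsilon$, and defer the genuinely near-$C^1$ and $C^{1+}$ refinements to the later subsections as the excerpt's outline indicates.
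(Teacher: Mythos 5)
Your core mechanism---iterating the one-step graph transform of slopes along the orbit $p_k=\varphi^{n-k}(y)$---is reasonable, but the quantity you induct on is the wrong one, and that is where the argument breaks. You track $|\theta_{\mathcal{T}_kv}(p_{n-k})-\theta_{E_u}(p_{n-k})|$, the deviation from $E_u$, while the lemma (and the way Proposition \ref{limitofv} and Corollary \ref{cor2.14} use it) concerns $\|(\mathcal{T}_nv)(z)\|=|\theta_{\mathcal{T}_nv}(z)|$, the slope measured against the horizontal, i.e.\ against $E_u(0)$. To pass from your bound to the stated one you need control of $|\theta_{E_u}(p_k)|$ itself, and you assert that $\theta_{E_u}(p_k)=\mathcal{O}(|p_k|)$ is known ``a priori.'' It is not: a priori the Anosov splitting is merely continuous, and a Lipschitz bound for $\theta_{E_u}$ at $0$ along $W_s(0)$ is \emph{stronger} than the H\"older bound $|\theta_{E_u}(z)|\le K|z|^{\alpha-\epsilon}$ that this whole subsection exists to prove (Corollary \ref{cor2.14}); if it were available in advance, the lemma and the limiting argument $\mathcal{T}_{nN}v\to E_u$ would be superfluous. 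The suggested ``bootstrap: first prove Lipschitz\dots using only $\theta_{E_u}=\mathcal{O}(|p_k|)$'' is circular as stated. Relatedly, your error bookkeeping mixes the two formulations: measured against $E_u$, the graph transform contracts the deviation with \emph{no} additive term (both slopes are moved by the same fractional-linear map, under which $E_u$ is invariant), whereas the additive $\mathcal{O}(|p_k|)$ base-point error (the off-diagonal term $\gamma_1$) appears precisely when one measures against the horizontal.

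The repair is to run your one-step induction directly on the horizontal slope, which is essentially a step-by-step variant of the paper's argument. Along the straightened $W_s(0)$ the differential is lower triangular, so one step sends a slope $s$ at $p$ to $\bigl(\gamma_1(p)+(2^{-1}+\mathcal{O}(|p|))s\bigr)/\bigl(2+\mathcal{O}(|p|)\bigr)$: the slope is multiplied by roughly $\tfrac14$ and picks up an additive $\mathcal{O}(|p|)$, while $|p|$ shrinks only by a factor about $\tfrac12$ per step; since $\tfrac14\cdot 2^{\alpha-\epsilon}<1$ for $\alpha\in(0,1]$, the hypothesis $|s|\le K|p|^{\alpha-\epsilon}$ propagates once $K$ is large enough to absorb the additive term and $\epsilon_1$ is small. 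The paper instead applies $d\varphi^n(y)$ in one block, using the estimate $\gamma_n(y)\le P(n)|y|$ (proved by the cocycle relation and induction) together with the mean value theorem $|y|\le L2^n|z|$, for $n\in[N,2N]$, and then extends to all $n\ge N$ inductively. Note also that in this model lemma the rates are the fixed eigenvalues $2,2^{-1}$, so the $\lambda/\kappa$ and bunching comparisons you sketch (and leave unfinished) play no role here; they only enter in the generalization of \S\ref{general}.
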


\begin{proof}
Recall that the differential of the map $\varphi^n$ at $0$ is:
\begin{align}
    d\varphi^n(0)=\begin{pmatrix}2^n&0\\0&2^{-n}\end{pmatrix}.\label{2.9}
\end{align}
Note that for a point $(0,y)\in W_s(0)$ there is no perturbation in the first component with respect to the origin. Therefore, the second entry of the first row of $d\varphi^n(y)$ has to be $0.$ But we could still have a non-zero component $\gamma_n (y)$ depending on $y$ in the first entry of the second row. Therefore,
\begin{align}
    d\varphi^n(y)=\begin{pmatrix}2^n+\mathcal{O}(\verts{y})&0\\\gamma_n (y)&2^{-n}+\mathcal{O}(\verts{y})\end{pmatrix}.\label{2.10}
\end{align}
Observe that $\gamma_n(0)=0.$ We claim that $\gamma_n(y)\leq P(n)\verts{y}$ for all $n$ where $P(n)$ is a possibly increasing function of $n.$

Meanwhile,
\begin{subequations}
\begin{align}
    (\mathcal{T}_nv)(z)=d\varphi^n(y)v(y)&=\begin{pmatrix}2^n+\mathcal{O}(\verts{y})&0\\\gamma_n (y)&2^{-n}+\mathcal{O}(\verts{y})\end{pmatrix}\begin{pmatrix}1\\ \theta_v(y)\end{pmatrix}\\
    &=\begin{pmatrix}2^n+\mathcal{O}(\verts{y})\\ \gamma_n(y)+(2^{-n}+\mathcal{O}(\verts{y}))\theta_v(y)\end{pmatrix}.\label{coneaction}
\end{align}
\end{subequations}
Because we are interested in the slopes, the right hand side is equivalent to \\$\begin{pmatrix}1\\ (\frac{1}{2^n+\mathcal{O}(\verts{y})}\gamma_n(y)+\frac{2^{-n}+\mathcal{O}(\verts{y})}{2^n+\mathcal{O}(\verts{y})}\theta_v(y)\end{pmatrix}.$ Therefore, it suffices to get the bound on the second entry. In fact, if we pick small $\verts{y}$,
\begin{subequations}
\begin{align}
    \norm{(\mathcal{T}_nv)(z)}&=\verts*{\frac{1}{2^n+\mathcal{O}(\verts{y})}\gamma_n(y)+\frac{2^{-n}+\mathcal{O}(\verts{y})}{2^n+\mathcal{O}(\verts{y})}\theta_v(y)}\label{slope}\\
    &\leq 2^{-n}(1+2^{-n}\mathcal{O}(\verts{y}))P(n)\verts{y}+(2^{-2n}+2^{-n}\mathcal{O}(\verts{y}))K\verts{y}^{\alpha-\epsilon}.\label{actionbound}
\end{align}
\end{subequations}
By the mean value theorem, we have $\verts{y}\leq L 2^n\verts{\varphi^n (y)}.$ Therefore,
\begin{align}
    \norm{(\mathcal{T}_nv)(z)}\leq& P(n)L\verts{z}+2^{-n}P(n)\mathcal{O}(\verts{y})\verts{z}
    \nonumber\\&+(2^{-2+\alpha})^n2^{-\epsilon n}KL^{\alpha-\epsilon}\verts{z}^{\alpha-\epsilon}+(2^{\alpha-1})^n L^{\alpha-\epsilon}2^{-\epsilon n}\mathcal{O}(\verts{y})K\verts{z}^{\alpha-\epsilon}.\label{boundtransform}
\end{align}
Since $\verts{y}\leq \epsilon_1$, $\mathcal{O}(\verts{y})<c$ for some constant $c>1$. Now take $N$ large enough such that $c2^{-\epsilon N}L^{\alpha-\epsilon}\leq \frac{1}{4}.$ Note that for $\alpha\in(0,1]$, $2^{-2+\alpha}\leq 1$ and $2^{\alpha-1}\leq 1.$ Therefore, the last two terms contribute at most $\frac{K}{2}\verts{z}^{\alpha-\epsilon}$. Further, for $n\in [N,2N]$, choose $K> 8 c P(n)L$. Combining the preceding with the facts that $\varphi$ contracts $y$ when $\verts{y}\leq \epsilon_1<1$ and $\verts{z}\leq \verts{z}^{\alpha-\epsilon}$ for $\alpha\in (0,1]$, we get
\begin{align}
    \norm{(\mathcal{T}_nv)(z)}\leq K\verts{z}^{\alpha-\epsilon}\label{holderbound}
\end{align}
for all $n\in[N,2N].$ Now we can use induction to prove the inequality \eqref{holderbound} for all $n\geq N.$

What remains to prove is the claim that $\gamma_n(y)\leq P(n)\verts{y}$ for all $n.$ In fact, using the chain rule $d\varphi^{n+m}(y)=d\varphi^n(\varphi^m(y))d\varphi^m(y).$ Therefore,
\begin{align*}
    &\begin{pmatrix}2^{n+m}+\mathcal{O}(\verts{y})&0\\\gamma_{n+m}(y)&2^{-n-m}+\mathcal{O}(\verts{y})\end{pmatrix}\\=&\begin{pmatrix}2^{n}+\mathcal{O}(\verts{y})&0\\\gamma_{n}(\varphi^m(y))&2^{-n}+\mathcal{O}(\verts{y})\end{pmatrix}\begin{pmatrix}2^{m}+\mathcal{O}(\verts{y})&0\\\gamma_{m}(y)&2^{-m}+\mathcal{O}(\verts{y})\end{pmatrix}
\end{align*}
which implies
\begin{align*}
    \gamma_{n}(y)=\gamma_1(\varphi^{n-1}(y))(2^{n-1}+\mathcal{O}(\verts{y}))+\parens*{\frac{1}{2}+\mathcal{O}(\verts{y})}\gamma_{n-1}(y).
\end{align*}
Since $\gamma_0(y)=0$ and $\varphi$ is contracting in the stable direction, the claim follows by induction.
\end{proof}

\begin{remark}
The argument used to bound \eqref{boundtransform} already imposes restrictions on $\alpha$ which gives a hint that higher regularity of the foliation is harder to achieve.
\end{remark}

Note that as $n$ goes to infinity $z$ approaches the origin. Therefore, Lemma \ref{proofofhc} does not prove H\"older continuity. Nevertheless, using Lemma \ref{proofofhc} we will prove that $\mathcal{T}_n$ preserves the collection of $(\alpha-~\epsilon)$-H\"older distributions in the stable direction. Using Remark \ref{rm2.10}, we know that $\mathcal{T}_nv$ for $v\in V(\delta)$ converges to $E_u$ as $n\to \infty$. Therefore, by equicontinuity, the limiting distribution, $E_u,$ has to be H\"older in the stable direction at the origin with exponent $\alpha-\epsilon$, see Corollary \ref{cor2.14}. Meanwhile, $E_u$ is as regular as the map $\varphi$ in the unstable direction because the unstable manifold is as regular as the map. Remember that $E_u$ near the origin is the tangent space of the unstable manifold. Since the distribution is regular in both stable and unstable directions, it is regular in the neighborhood of the origin.

For a fixed $\epsilon_1>0$ define
\begin{align}
    V(\delta,\epsilon_0, K) \coloneqq \{v\in V(\delta)\mid \text{for all } y \text{ with }  \epsilon_0\leq \verts{y}\leq\epsilon_1, \verts{\theta_v(y)}\leq K\verts{y}^{\alpha-\epsilon}\}.\label{econefield}
\end{align}

\begin{prop}\label{limitofv}
For any $0<\delta<1$ and $\epsilon_1>0$ there exist positive constants $K>0$, $N\in \Z_+$ and $\epsilon_0<\epsilon_1$ so that, for all $n\in \Z_+$,
\begin{align}
    \mathcal{T}_{nN} (V(\delta))\subset V(\delta, \epsilon_0 2^{-n},K).\label{2.11}
\end{align}
\end{prop}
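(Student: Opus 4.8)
The plan is to iterate Lemma~\ref{proofofhc} and track what happens to the scale $\epsilon_1$ on which the Hölder-type bound is guaranteed. The key observation is that Lemma~\ref{proofofhc} takes a bound at a point $y$ with $|y|<\epsilon_1$ and produces a bound at $z=\varphi^n(y)$; since $\varphi$ contracts in the stable direction roughly by a factor $2^{-n}$ (more precisely $|z|\le L^{-1}2^{-n}|y|$ from the mean value theorem, up to the constants in the proof), one application of $\mathcal{T}_{N}$ converts a bound on $\{\,|y|\le \epsilon_1\}$ into a bound on $\{\,|z|\lesssim 2^{-N}\epsilon_1\,\}$. So $n$ applications of $\mathcal{T}_N$, i.e. $\mathcal{T}_{nN}$, should give a bound valid down to scale $\sim 2^{-nN}\epsilon_1$, and we take $\epsilon_0$ absorbing the $N$-dependent and metric constants so that the guaranteed scale is exactly $\epsilon_0 2^{-n}$.

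First I would fix $\delta\in(0,1)$ and $\epsilon_1>0$ and invoke Lemma~\ref{proofofhc} to obtain $K>0$, $0<\epsilon_1'\le \epsilon_1$, $\delta'>0$, and $N\in\mathbb{N}$; shrinking $\delta$ to $\delta'$ and relabelling, we may assume the lemma applies on $V(\delta)$ with scale $\epsilon_1$. Next I would set up the induction on $n$. The base case $n=1$: given $v\in V(\delta)$, the tangent-space bound $\norm{v(0,y)}<K|y|^{\alpha-\epsilon}$ holds trivially for \emph{all} $y$ once $K$ is chosen large relative to $\sup_{|y|\le\epsilon_1}|\theta_v(y)|$ — but we want it only on the annulus, and in fact for $v\in V(\delta)$ the slope is bounded by a universal constant near the origin, so by enlarging $K$ we get $v\in V(\delta,\epsilon_0,K)$ for the full range; then Lemma~\ref{proofofhc} (applied pointwise over $y$ in the relevant annulus, for the single exponent $n=N$) yields $\mathcal{T}_N v\in V(\delta,\epsilon_0 2^{-1},K)$ after choosing $\epsilon_0$ small enough that $|z|=|\varphi^N(y)|\le \epsilon_0 2^{-1}$ whenever $|y|\ge \epsilon_0$, using $|z|\asymp 2^{-N}|y|$. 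For the inductive step, assume $\mathcal{T}_{nN}(V(\delta))\subset V(\delta,\epsilon_0 2^{-n},K)$; take $w=\mathcal{T}_{nN}v$, note $w\in V(\delta)$ by Remark~\ref{rm2.10} and that $w$ satisfies the hypothesis $\norm{w(0,y)}<K|y|^{\alpha-\epsilon}$ for $\epsilon_0 2^{-n}\le|y|\le\epsilon_1$; applying Lemma~\ref{proofofhc} again gives the bound at $z=\varphi^N(y)$, and the image annulus is contained in $\{\epsilon_0 2^{-(n+1)}\le|z|\le\epsilon_1\}$ by the same contraction estimate, so $\mathcal{T}_{(n+1)N}v\in V(\delta,\epsilon_0 2^{-(n+1)},K)$.

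The main obstacle, and the point requiring care, is the bookkeeping of the constants: Lemma~\ref{proofofhc} is stated for a \emph{fixed} $y$ and $n\ge N$, whereas here I need the single iterate $\mathcal{T}_N$ applied uniformly over a whole annulus of $y$'s, and I must verify that the $K$, $\delta$, $N$ it produces can be chosen \emph{independently of $n$} — which they can, since Lemma~\ref{proofofhc} already built its $K$ to work for all $n\ge N$ by the induction internal to its proof. I also need the elementary geometric fact that $\varphi^N$ maps the annulus $\{\epsilon_0\le|y|\le\epsilon_1\}$ into $\{\epsilon_0 2^{-1}\lesssim|z|\le\epsilon_1\}$ with the implied constant absorbable into the choice of $\epsilon_0$; this is where the two-sided bound $\frac{1}{C}\kappa^n\le\|d\varphi^{-n}\|\le C\lambda^n$ (equivalently the mean value theorem bound used in the lemma) is essential, and it is the only place the lower contraction rate enters. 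Once this is in place the induction closes and \eqref{2.11} follows for all $n\in\mathbb{Z}_+$.
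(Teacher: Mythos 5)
Your overall strategy --- a trivial bound on a fixed outer annulus obtained from a large $K$, then propagation inward by iterating Lemma~\ref{proofofhc}, with the scale shrinking by a factor $2$ per application of $\mathcal{T}_N$ --- is the same as the paper's. But your inductive step, as written, has a genuine gap. Membership in $V(\delta,\epsilon_0 2^{-(n+1)},K)$ requires the inequality $|\theta(z)|\le K|z|^{\alpha-\epsilon}$ at \emph{every} $z$ with $\epsilon_0 2^{-(n+1)}\le|z|\le\epsilon_1$, whereas applying Lemma~\ref{proofofhc} with exponent $N$ to points $y$ of the previous annulus $\{\epsilon_0 2^{-n}\le|y|\le\epsilon_1\}$ only yields the bound at their images $z=\varphi^N(y)$, and these satisfy $|z|\le C\,2^{-N}|y|\le C\,2^{-N}\epsilon_1$. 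So the outer band $\{C\,2^{-N}\epsilon_1\le|z|\le\epsilon_1\}$ of the new annulus --- a fixed, $n$-independent region --- is never reached by the lemma, and your justification (``the image annulus is contained in $\{\epsilon_0 2^{-(n+1)}\le|z|\le\epsilon_1\}$'') points in the wrong direction: containment of the image in the target annulus is not what is needed; you need the target annulus to be covered by points at which the bound has been established.

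The missing ingredient is exactly the observation you invoked only in the base case, applied at every stage: since $\mathcal{T}_{kN}v\in V(\delta)$ for all $k$ (Remark~\ref{rm2.10}), the slopes of all the iterated fields are uniformly bounded (within $\delta$ of the bounded function $\theta_{E_u}$), so once $K$ exceeds $\bigl(\sup|\theta_{E_u}|+\delta\bigr)\,(C\,2^{-N}\epsilon_1)^{-(\alpha-\epsilon)}$ --- a constant depending on $N$, $\epsilon_1$, $\delta$ but not on $n$ --- the required inequality holds trivially on the outer band for every iterate, and the lemma only has to supply it for $|z|\le C\,2^{-N}\epsilon_1$, which the image of the previous annulus does cover (up to harmless constant bookkeeping at the inner edge, where one needs $2^{N}$ large compared with the mean-value constant $L$). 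With this insertion your induction closes, and the resulting argument coincides with the paper's proof, which does the same bookkeeping in the opposite direction: it first records $V(\delta)\subset V(\delta,\epsilon_0,K)$ (valid for every field in $V(\delta)$, hence for all iterates), and then, for each target point in the shrinking annulus, pulls back along the orbit to the fixed annulus $\{\epsilon_0\le|y'|\le\epsilon_1\}$ and pushes forward with Lemma~\ref{proofofhc}.
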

\begin{proof}
For fixed $\epsilon_0<1$ and $\delta>0$, we claim that $V(\delta)\subset V(\delta,\epsilon_0,K).$ In fact, the uniform continuity of $v\in V(\delta)$ implies that $\theta_v(y)$ is bounded. Therefore, we can choose large $K$ such that $v$ satisfies the condition in Definition \ref{econefield}.

For fixed $\epsilon_1>0$, choose $0<\epsilon_0<\epsilon_1$ such that $\varphi(0,\epsilon_1)>\epsilon_0$ and $\varphi(0,-\epsilon_1)<-\epsilon_0.$ It means the point $(0,\epsilon_1)$ remains $\epsilon_0$-away from the origin after propagation. Note that $\varphi^n(\{(0,y)|\epsilon_0< ~\verts{y}<\epsilon_1\})$ covers the punctured neighborhood of the origin.

Fix $v\in V(\delta)$ and $n\in \Z_+$. We want to prove that $\mathcal{T}_{nN}v\in V(\delta,\epsilon_02^{-n},K).$ Consider $y$ such that $\epsilon_02^{-n}<\verts{y}<\epsilon_1.$ From our choice of $\epsilon_0$ we know that there exists $m\leq n$ such that for some $y'$ with $\epsilon_0<\verts{y}<\epsilon_1$ and $\varphi^n(0,y')=(0,y).$ Because $\varphi^n(0,y')$ approaches to $0$, we can apply Lemma \ref{proofofhc} iteratively to $\varphi^{iN}(0,y')$ for $0\leq i< m.$ Note that $\theta_v(y)\leq K\verts{y}^{\alpha-\epsilon}$ implies that the condition holds for $z=\varphi^m(0,y')=(0,y)$ by induction. 
\end{proof}

\begin{cor}\label{cor2.14}
The unstable distribution is $(\alpha-\epsilon)$-H\"older continuous at the origin.
\end{cor}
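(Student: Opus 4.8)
\medskip

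\noindent The plan is to read the statement off from Proposition \ref{limitofv} by a limiting argument along the stable leaf $W_s(0)$, and then to promote the resulting one--directional H\"older bound to a genuine estimate on a two--dimensional neighborhood of the origin by exploiting the fact that $E_u$ is smooth along individual unstable leaves. All the hard analysis has already been carried out in Lemma \ref{proofofhc} and Proposition \ref{limitofv}; what is left is bookkeeping.

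First I would treat the stable direction. Fix $0<\delta<1$ and $\epsilon_1>0$ and let $K$, $N$, $\epsilon_0$ be the constants produced by Proposition \ref{limitofv}. Choose any $v\in V(\delta)$; the cleanest choice is $v=E_u$ itself, which is $0$-close to $E_u$ and satisfies $\mathcal{T}_{nN}E_u=E_u$ by invariance of the unstable distribution. Proposition \ref{limitofv} gives, for every $n\in\Z_+$,
\begin{align*}
    \verts{\theta_{\mathcal{T}_{nN}v}(y)}\leq K\verts{y}^{\alpha-\epsilon}\qquad\text{whenever }\epsilon_0 2^{-n}\leq\verts{y}\leq\epsilon_1.
\end{align*}
For a fixed $y$ with $0<\verts{y}\leq\epsilon_1$, I would pick $n$ large enough that $\epsilon_0 2^{-n}\leq\verts{y}$ and let $n\to\infty$, using $\mathcal{T}_{nN}v\to E_u$ (Remark \ref{rm2.10}) to obtain $\verts{\theta_{E_u}(y)}\leq K\verts{y}^{\alpha-\epsilon}$; with the choice $v=E_u$ the sequence is already constant and no limit is needed. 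Since $W_u(0)$ is horizontal we have $\theta_{E_u}(0)=0$, so this is precisely $(\alpha-\epsilon)$-H\"older continuity of $\theta_{E_u}$ at the origin along $W_s(0)$, with constant $K$. The essential point is that $K$ does not depend on $n$, so the exponent $\alpha-\epsilon$ is not degraded by the limit.

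Next I would combine this with regularity in the unstable direction. Along any unstable leaf $W_u(x)$ the distribution $E_u$ is just the tangent line field of that leaf, hence as smooth as $\varphi$; in particular $\theta_{E_u}$ is $C^1$ along each such leaf, with a Lipschitz constant that can be taken uniform over the unstable leaves meeting a fixed small neighborhood of $0$ (this is the uniform $C^1$ control in the unstable manifold theorem). Given $x$ near $0$, the local product structure of the two foliations lets me write $W_u(x)\cap W_s(0)=\{x'\}$ with $d_{W_u}(x,x')=\mathcal{O}(\verts{x})$ and $\verts{x'}=\mathcal{O}(\verts{x})$, whence
\begin{align*}
    \verts{\theta_{E_u}(x)-\theta_{E_u}(0)}\leq\verts{\theta_{E_u}(x)-\theta_{E_u}(x')}+\verts{\theta_{E_u}(x')-\theta_{E_u}(0)}=\mathcal{O}(\verts{x})+\mathcal{O}(\verts{x}^{\alpha-\epsilon})=\mathcal{O}(\verts{x}^{\alpha-\epsilon}),
\end{align*}
using $\alpha-\epsilon\leq 1$, so that $\verts{x}\leq\verts{x}^{\alpha-\epsilon}$ for $\verts{x}$ small. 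Since the unstable distribution is generated near $0$ by the vector field $(1,\theta_{E_u})$, this is the asserted $(\alpha-\epsilon)$-H\"older continuity at the origin.

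I expect the only delicate point to be the first, limiting step---and even there the difficulty has already been absorbed into the $n$-independence of $K$ in Proposition \ref{limitofv}, i.e.\ into the inductive bound of Lemma \ref{proofofhc}. The combination step is soft; the one thing to be careful about is that the Lipschitz bound along unstable leaves be uniform over nearby leaves and not merely along $W_u(0)$, since otherwise the passage from a bound on $W_s(0)$ to a bound on a full two--dimensional neighborhood would fail.
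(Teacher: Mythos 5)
Your proposal is correct and follows essentially the same route as the paper: the stable-direction bound comes from Proposition \ref{limitofv} by letting $n\to\infty$ (your observation that taking $v=E_u$ makes the limit unnecessary is a harmless shortcut, since $\mathcal{T}_{nN}E_u=E_u$), and the paper handles the unstable direction and the combination exactly as you do, in the discussion preceding the corollary. No gaps; your explicit product-structure bookkeeping just spells out what the paper leaves informal.
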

\begin{proof}
Choose $N$, $K$ and $\delta>0$ and $\epsilon_1>0$ as in Proposition \ref{limitofv}. Then for $\verts{y}<\epsilon_1$ we have
\begin{align*}
    \verts{\theta_{E_u}(y)}=\lim_{n\to\infty}\verts{\theta_{\mathcal{T}_{nN}v}(y)}\leq K \verts{y}^{\alpha-\epsilon}
\end{align*}
because the inequality holds for arbitrary $0<\epsilon_02^{-n}<\verts{y}< \epsilon_1$ as we can choose $nN$ to be very large.
\end{proof}

\begin{remark}\label{rmkholder}
In a general setting, the stable and unstable manifolds are not the coordinate axes. Nevertheless, they are transverse. Since the manifolds are as smooth as the map $\varphi$, we can find a smooth coordinate map $\xi:B_\sigma(0)\to[-\epsilon,\epsilon]\times[-\epsilon,\epsilon]$ such that points in $W_s'\coloneqq \xi(W_s)$ have the first component $0$ while those in $W_u'\coloneqq \xi(W_u)$ have the second component $0.$ In other words, we can straighten out the stable and unstable manifolds. 

Now let $\phi$ be the push forward of the Anosov diffeomorphism $\varphi$ with respect to the coordinate $\xi.$ From the preceding discussion, we know that the unstable distribution associated to $\phi$ is H\"older. Note that the unstable distribution of $\varphi$ is the pull back of the one for $\phi$. The subtlety here is $d\phi(0)$ changes to $\begin{pmatrix}\lambda_1&0\\0&\lambda_2\end{pmatrix}$ for some positive $\lambda_i.$ However, the volume preserving property of $\varphi$ implies that $\lambda_1=\lambda_2^{-1}.$ Therefore, the argument we gave passes through. The only change is $2$ gets replaced by $\lambda_1>1$. This proves Proposition \ref{prop2.8}.
\end{remark}

\subsection{Differentiability}\label{diff}
Assuming $\varphi$ satisfies the properties in Proposition \ref{prop2.8}, we have the following statement:

\begin{prop}\label{propdiff}
The unstable distribution on $\mathbb{T}^2$ associated to $\varphi$ is differentiable at the origin.
\end{prop}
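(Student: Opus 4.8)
The plan is to mimic the structure of the H\"older-continuity argument: show that the action $\mathcal{T}_n$ preserves a suitable class of ``differentiable-at-the-origin'' distributions and then pass to the limit $\mathcal{T}_n v \to E_u$. Concretely, I would work in the coordinates where $W_u(0)$ is the horizontal axis and $W_s(0)$ the vertical axis, and where $d\varphi^n(0) = \operatorname{diag}(2^n, 2^{-n})$. Differentiability of the distribution $x \mapsto E_u(x)$ at the origin means differentiability of the slope function $\theta_{E_u}$, and since by Proposition~\ref{prop2.8} (Corollary~\ref{cor2.14}) we already know $\theta_{E_u}(0)=0$ and $\theta_{E_u}$ is $(\alpha-\epsilon)$-H\"older, the task is to produce the derivative as a genuine limit of difference quotients $\theta_{E_u}(y)/|y|$ as $y \to 0$ along the stable manifold (and similarly along the unstable direction, where regularity is inherited from the map). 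The key idea, exactly as flagged in the outline, is that $\varphi$ contracts the stable manifold, so a point $z$ near $0$ can be written as $z = \varphi^n(y)$ with $|y|$ of order $2^n |z|$; running the transformation $\mathcal{T}_n$ gives us good control of $\theta(z)$ in terms of $\theta(y)$, and letting $n\to\infty$ turns ``uniform estimates on the iterates'' into ``existence of a limit at the origin.''

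The main analytic input is the refined version of the computation in Lemma~\ref{proofofhc}. Starting from
\begin{align*}
    d\varphi^n(y) = \begin{pmatrix} 2^n + \mathcal{O}(|y|) & 0 \\ \gamma_n(y) & 2^{-n} + \mathcal{O}(|y|) \end{pmatrix},
\end{align*}
I would push one step further than the H\"older proof and expand $\gamma_n(y)$ to first order in $y$: writing $\gamma_n(y) = \gamma_n'(0)\, y + o(|y|)$ (the error controlled using $C^2$-ness of $\varphi$, or more honestly $C^{1+\mathrm{Lip}}$, together with the recursion $\gamma_n(y) = \gamma_1(\varphi^{n-1}(y))(2^{n-1}+\mathcal{O}(|y|)) + (\tfrac12 + \mathcal{O}(|y|))\gamma_{n-1}(y)$ already derived in the excerpt). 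From \eqref{slope}, the slope transforms as
\begin{align*}
    \theta_{\mathcal{T}_n v}(z) = \frac{\gamma_n(y)}{2^n + \mathcal{O}(|y|)} + \frac{2^{-n}+\mathcal{O}(|y|)}{2^n+\mathcal{O}(|y|)}\, \theta_v(y),
\end{align*}
and dividing by $|z| \sim 2^{-n}|y|$ (up to the mean-value-theorem constant, which I would track as an exact limiting Jacobian $J_n(y) = \partial_y \varphi^n(0,y)$) I would show that the difference quotient $\theta_{E_u}(z)/z$ converges: the first term contributes $\gamma_n'(0)/(2^n J_n)$, which one checks forms a convergent series in $n$ by the bunching/contraction estimates, while the second term is damped by the factor $2^{-2n}$ (times $|y|/|z| \sim 2^n$, net $2^{-n}$) and hence vanishes in the limit, so the $(\alpha-\epsilon)$-H\"older bound $|\theta_v(y)| \le K|y|^{\alpha-\epsilon}$ from Proposition~\ref{limitofv} is more than enough to kill it. This identifies $\theta_{E_u}'(0)$ along the stable direction as an explicit convergent expression built from the 2-jet of $\varphi$ at $0$; along the unstable direction $W_u(0)$ the slope is $C^{\alpha-\epsilon}$ as a function on the manifold $W_u(0)$, which is as smooth as $\varphi$, so differentiability there is immediate, and the two combine to give differentiability of $\theta_{E_u}$ at $0$ in $\mathbb{T}^2$.

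The step I expect to be the main obstacle is making the passage to the limit rigorous: Lemma~\ref{proofofhc} only gives estimates at the moving point $z = \varphi^n(y)$, not at a fixed point, so just as in the H\"older case (Proposition~\ref{limitofv} and Corollary~\ref{cor2.14}) I need the iteration-invariant class $V(\delta,\epsilon_0,K)$ to be upgraded to one that also records a uniform bound on the \emph{second-order} behaviour of $\theta_v$ near $0$ (a bound of the form $|\theta_v(y) - \theta_v'(0)\,y| \le K'|y|^{1+\beta}$ on the annulus $\epsilon_0 \le |y| \le \epsilon_1$, for some $\beta>0$ coming from the bunching), show $\mathcal{T}_{nN}$ preserves it with the annulus shrinking by $2^{-n}$, and only then use equicontinuity of the derivatives — together with the already-established convergence $\mathcal{T}_{nN} v \to E_u$ — to conclude that $\theta_{E_u}$ is differentiable at $0$ with the derivative computed above. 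Getting the exponent bookkeeping to close (so that the invariant cone of ``almost-linear'' slope functions is genuinely preserved and not merely stable for finitely many steps) is where the $\alpha$-bunching hypothesis will actually be used, and is the delicate point; the rest is a repeat of the $\mathcal{T}_n$-invariance machinery already set up in \S\ref{holderc}.
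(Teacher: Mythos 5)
Your proposal is correct in outline but takes a genuinely different route from the paper. The paper never identifies the derivative: it encodes differentiability through the symmetric Cauchy-type criterion \eqref{differentiation}, and the whole point of the antisymmetrized quantity $h_1\theta_{E_u}(h_2)+h_2\theta_{E_u}(-h_1)$ in Lemma \ref{lemdiff} is that the inhomogeneous linear term $ch$ produced by $\gamma_1(h)=a_1h+\mathcal{O}(h^2)$ in the one-step transfer relation \eqref{2.18} cancels out of that combination; hence the invariant-class machinery of \S\ref{holderc} runs essentially verbatim, with a single application of $\varphi$ and a contraction factor $2^{-\epsilon'}$, and no expansion of $\gamma_n$, no candidate derivative, and (in this model case) no real use of bunching are needed. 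You instead identify the derivative explicitly as the limit of the linear data (the fixed point $a=4c$ of the induced affine map $a\mapsto \tfrac{a}{2}+2c$ on linear coefficients, i.e.\ an explicit expression in the $2$-jet of $\varphi$ at $0$) and propagate an ``affine approximation'' class $|\theta_v(y)-ay|\le K'|y|^{1+\beta}$ under $\mathcal{T}_{nN}$. That can be made to work, and it buys you a formula for $\theta_{E_u}'(0)$; the cost is extra bookkeeping: the coefficient $a$ must be carried as part of the data of the class (for a general $v\in V(\delta)$ the quantity $\theta_v'(0)$ need not exist), and ``equicontinuity of the derivatives'' is not really the mechanism — what passes to the limit is the affine bound together with convergence of the coefficients.

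One concrete warning about your middle step as written: you cannot expand $\gamma_n(y)=\gamma_n'(0)\,y+o(|y|)$ and then use this for $y$ in a fixed annulus $\epsilon_0\le|y|\le\epsilon_1$, because the remainder constant depends on $n$ (exactly as the paper's own estimate $\gamma_n(y)\le P(n)|y|$ carries an $n$-dependent $P(n)$): derivatives of the entries of $d\varphi^n$ grow with $n$, so the error $(\gamma_n(y)-\gamma_n'(0)y)/(2^nJ_n\,y)$ need not tend to $0$ uniformly on the annulus as $n\to\infty$. The correct implementation is the one your final paragraph gestures at: apply the one-step relation \eqref{2.18} repeatedly, so that quadratic errors are only incurred at the geometrically small points $\varphi^k(y)$ and are summable — equivalently, run the invariant-class iteration with the annulus shrinking by the contraction factor, as in Proposition \ref{limitofv}. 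With that fix your argument closes; it is simply longer than the paper's cancellation trick. (Your reduction to the stable direction, with the unstable direction handled by smoothness of $W_u(0)$, is the same simplification the paper itself makes.)
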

A one variable function $f:\R\to \R$ is differentiable means that $\frac{f(x+h)-f(x)}{h}$ approaches to a limit, say $f'(x)$, as $h$ goes to zero. Note that the sign of $h$ does not matter. Thus, for $h_1,h_2>0$,
\begin{align*}
    \lim_{h_1\to 0}\frac{f(x+h_1)-f(x)}{h_1}&=f'(x)\\
    \lim_{h_2\to 0}\frac{f(x)-f(x-h_2)}{h_2}&=f'(x)
\end{align*}
which is same as the statement
\begin{align}
    \lim_{h_1,h_2\to 0}\frac{1}{h_1h_2}\verts*{h_2f(x+h_1)+h_1f(x-h_2)-(h_1+h_2)f(x)}=0.\label{differentiation}
\end{align}
It is clear that a function satisfying equation \eqref{differentiation} is differentiable since the difference quotient satisfies a Cauchy criterion.

Similar to the previous section, proving differentiability is tantamount to proving differentiability of $\theta_{E_u}$ along the stable direction. Note that Proposition \ref{prop2.8} already implies Lipschitz continuity.

Again without loss of generality, assume that $W_u(0)$ and $W_s(0)$ are straightened out. By an abuse of notation, we write $(0,y)\in W_s(0)$ as $y$.
\begin{lem}\label{lemdiff}
For all $\epsilon'>0$, there exit $\epsilon>0$ and $K>0$ such that if 
\begin{align}
    \verts{h_1\theta_{E_u}(h_2)+h_2\theta_{E_u}(-h_1)}\leq Kh_1h_2(h_1+h_2)^{1-\epsilon'}\label{lem2.14}
\end{align}
where $0<h_i<\epsilon$ then
\begin{align}
    \verts{\tilde{h}_1\theta_{E_u}(\tilde{h}_2)+\tilde{h}_2\theta_{E_u}(-\tilde{h}_1)}\leq K\tilde{h}_1\tilde{h}_2(\tilde{h}_1+\tilde{h}_2)^{1-\epsilon'}\label{lem2.15}
\end{align}
where $(-1)^i\tilde{h}_i=\varphi((-1)^ih_i)$.
\end{lem}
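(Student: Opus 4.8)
The plan is to mimic the structure of the proof of Lemma \ref{proofofhc}: push the three points $-h_1,0,h_2$ forward by one step of $\varphi$ along the stable manifold $W_s(0)$ (which, after straightening, is the $y$-axis fixed by $\varphi$), track how the slope function $\theta_{E_u}$ transforms under $d\varphi$, and compare the quantity in \eqref{lem2.15} to the one in \eqref{lem2.14}. Since $E_u$ is $d\varphi$-invariant, we have $\theta_{E_u}(\tilde h_i) = $ (the slope of $d\varphi(0,h_i) E_u(h_i)$), so the new slope is obtained from the old by the same fractional-linear-type formula as in \eqref{slope}: writing $d\varphi(y) = \begin{pmatrix} \lambda_1 + \mathcal O(|y|) & 0 \\ \gamma_1(y) & \lambda_1^{-1} + \mathcal O(|y|)\end{pmatrix}$ with $\gamma_1(0)=0$ and $|\gamma_1(y)| \le c|y|$, one gets
\begin{align*}
    \theta_{E_u}(\tilde h_i) = \frac{\gamma_1(h_i) + (\lambda_1^{-1}+\mathcal O(|h_i|))\theta_{E_u}(h_i)}{\lambda_1 + \mathcal O(|h_i|)}.
\end{align*}
First I would substitute this expression (and the analogous one at $-h_1$) into the left side of \eqref{lem2.15}, and also use the mean value theorem to relate $\tilde h_i$ to $h_i$: $\tilde h_i = (\lambda_1 + \mathcal O(|h_i|)) h_i$, so in particular $\tilde h_i \asymp \lambda_1 h_i$ and $\tilde h_1 + \tilde h_2 \asymp \lambda_1(h_1+h_2)$.

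Next I would expand the resulting expression and organize it into a "main term" coming from $\theta_{E_u}$ and an "error term" coming from the $\gamma_1$'s and the $\mathcal O(|h_i|)$ perturbations. The main term, after clearing the common denominators $\lambda_1^2 + \mathcal O(|h_i|)$, is essentially $\lambda_1^{-2}\bigl(h_1 \theta_{E_u}(h_2) + h_2\theta_{E_u}(-h_1)\bigr)$ up to lower-order corrections, to which I apply the inductive hypothesis \eqref{lem2.14} and the estimate $\tilde h_1 \tilde h_2(\tilde h_1+\tilde h_2)^{1-\epsilon'} \asymp \lambda_1^{3-\epsilon'}h_1h_2(h_1+h_2)^{1-\epsilon'}$; this produces a contraction factor of order $\lambda_1^{-2}\cdot\lambda_1^{-(3-\epsilon')}\cdot\lambda_1^3 = \lambda_1^{-2+\epsilon'} < 1$ (using $\lambda_1 > 1$), exactly as the bunching-type bookkeeping in \eqref{boundtransform} gave factors $2^{-2+\alpha}$. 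For the error term, the key inputs are $|\gamma_1(h_i)| \le c|h_i|$, $|\theta_{E_u}(h_i)| \le K'|h_i|^{\alpha-\epsilon}$ (Lipschitz, from Proposition \ref{prop2.8} / Corollary \ref{cor2.14}), and the fact that each stray $\mathcal O(|h_i|)$ factor is $\le \mathcal O(\epsilon)$; these should make the error $\mathcal O(h_1 h_2 (h_1+h_2))$ with a constant independent of $K$, which is swallowed by $K \tilde h_1\tilde h_2(\tilde h_1+\tilde h_2)^{1-\epsilon'}$ once $K$ is chosen large and $\epsilon$ small. As in Lemma \ref{proofofhc}, after handling a base range of steps with $K$ large enough one concludes by induction over further iterates.

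The main obstacle I expect is controlling the error term uniformly — specifically, making sure that the contributions of $\gamma_1$ and of the off-diagonal $\mathcal O(|h_i|)$ perturbations do not accumulate a factor that grows with the number of iterates or that secretly depends on $K$. This is the analogue of the claim in Lemma \ref{proofofhc} that $\gamma_n(y) \le P(n)|y|$, and here one wants something sharper: that the \emph{second-difference} structure of \eqref{differentiation} causes the worst first-order errors to cancel, leaving only a genuinely second-order (in $h_i$) remainder. I would extract this by Taylor-expanding $\gamma_1$ and the $\mathcal O(|y|)$ terms to first order in $y$ and checking that the coefficient of the linear part enters \eqref{lem2.15} in the combination $h_1 \cdot h_2 - h_2 \cdot h_1 = 0$ (after the common rescaling), so only the quadratic remainder survives; this is where the condition "differentiate along the fixed stable manifold" and the precise form $h_2 f(x+h_1) + h_1 f(x-h_2) - (h_1+h_2)f(x)$ of \eqref{differentiation} are really used. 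A secondary, more bookkeeping-level obstacle is that $\epsilon'$ must be taken large enough (equivalently $\alpha$ close enough to its range) for the contraction factor $\lambda_1^{-2+\epsilon'}$ to beat the losses from the $\asymp$ estimates relating $\tilde h_i$ to $h_i$; tracking this carefully is what pins down how $\epsilon$ must be chosen in terms of $\epsilon'$.
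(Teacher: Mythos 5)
Your overall route is the same as the paper's: push $-h_1,0,h_2$ forward one step of $\varphi$ along the straightened $W_s(0)$, use the fractional-linear transformation of the slope coming from $d\varphi$ (the analogue of \eqref{slope}, which the paper records as $\theta_{E_u}(\tilde h)=2^{-2}\theta_{E_u}(h)+ch+\mathcal{O}(h^2)$), observe that the linear part of $\gamma_1$ enters only through the antisymmetric combination $\tilde h_1h_2-\tilde h_2h_1=\mathcal{O}(h_1h_2(h_1+h_2))$ and therefore cancels to leading order, and absorb the remaining genuinely quadratic error by choosing $K$ large. That cancellation is exactly the paper's mechanism, and you identified it correctly; also the lemma is a single application of $\varphi$, so your ``base range of steps'' is unnecessary but harmless.

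There is, however, a concrete error in your bookkeeping that would make the written argument fail. The points $(0,\pm h_i)$ lie on the stable manifold, so $\varphi$ \emph{contracts} them: $\tilde h_i=(\lambda_1^{-1}+\mathcal{O}(h_i))h_i$ (in the paper, $\tilde h_i=\tfrac12 h_i+\mathcal{O}(h_i^2)$), not $\tilde h_i\asymp\lambda_1 h_i$ as you claim. Redoing the count with the correct relation, the main term picks up $\lambda_1^{-2}$ from the slope recursion and another $\lambda_1^{-1}$ from converting $\tilde h_i$ to $h_i$ in the prefactors, i.e. it is $\lambda_1^{-3}\lvert h_1\theta_{E_u}(h_2)+h_2\theta_{E_u}(-h_1)\rvert$, while $h_1h_2(h_1+h_2)^{1-\epsilon'}\le\lambda_1^{3-\epsilon'}(1+\mathcal{O}(\epsilon))\,\tilde h_1\tilde h_2(\tilde h_1+\tilde h_2)^{1-\epsilon'}$; the net factor is $\lambda_1^{-\epsilon'}$ (the paper's $2^{-\epsilon'}$), not your $\lambda_1^{-2+\epsilon'}$. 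This is not cosmetic: the true margin is only $1-\lambda_1^{-\epsilon'}$, which is exactly why the exponent loss $\epsilon'>0$ is essential (at $\epsilon'=0$ there is no room to absorb the error), why $\epsilon$ must be taken small and $K$ large \emph{depending on} $\epsilon'$, and why your closing remark that ``$\epsilon'$ must be taken large enough to beat the losses'' is an artifact of the miscount — any $\epsilon'>0$ works. Moreover, with your expansion relation $\tilde h_i\asymp\lambda_1 h_i$ the iterates would leave the region $0<h_i<\epsilon$ and the inductive scheme (the analogue of Proposition \ref{limitofv}) could not be run; with the correct contraction it can. Once this direction/factor error is fixed, your argument coincides with the paper's proof.
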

\begin{remark}
Note that Lemma \ref{lemdiff} implies Proposition \ref{diff} when $W_u(0)$ and $W_s(0)$ are straightened out. In particular, we can use the argument given in Proposition \ref{limitofv} with a modification:
\begin{align}
    V(\delta,\epsilon_0, K) \coloneqq \{&v\in V(\delta)\mid \text{ for all } h_i \text{ with }  \epsilon_0\leq \verts{h_i}\leq\epsilon_1,\theta_v \text{ satisfies } \eqref{lem2.14}\}.
\end{align}
A general case follows from the straightening argument given in Remark \ref{rmkholder}.
\end{remark}
\begin{proof}[Proof of Lemma \ref{lemdiff}]
Using the invariance of $E_u$ under the action of $\varphi$, we know that $E_u(\tilde{h})=d\varphi(h) E_u(h)$ where $\tilde{h}=\varphi(h)$. Because we are interested in the slopes, the discussion after equation \eqref{coneaction} implies
\begin{align}
    \theta_{E_u}(\tilde{h})&=\frac{\gamma_1(h)}{2+\mathcal{O}(h)}+\frac{2^{-1}+\mathcal{O}(h)}{2+\mathcal{O}(h)}\theta_{E_u}(h)\nonumber\\
    &=2^{-2}\theta_{E_u}(h)+ch+\mathcal{O}(\verts{h}^2)\label{2.18}
\end{align}
where $c$ is a constant. Moreover, $ch$ in the second line takes into account the second order expansion $\gamma_1(h)=a_1h+\mathcal{O}(h^2)$ at the origin since $\gamma_1(0)=0.$

Note that $\tilde{h}_i=\frac{1}{2}h_i+\mathcal{O}(h_i^2)$. Therefore,
\begin{align}
    \verts{\tilde{h}_1\theta_{E_u}(\tilde{h}_2)+\tilde{h}_2\theta_{E_u}(-\tilde{h}_1)}\leq& \frac{1}{4}\verts*{\tilde{h}_1\theta_{E_u}(h_2)+\tilde{h}_2\theta_{E_u}(-h_1)}+\mathcal{O}(\tilde{h}_1\tilde{h}_2(\tilde{h}_1+\tilde{h}_2))\nonumber\\
    =& \frac{1}{8}\verts*{h_1\theta_{E_u}(h_2)+h_2\theta_{E_u}(-h_1)} +\mathcal{O}(\tilde{h}_1\tilde{h}_2(\tilde{h}_1+\tilde{h}_2))\nonumber\\
    \leq& \frac{1}{8}Kh_1h_2(h_1+h_2)^{1-\epsilon'}+\mathcal{O}(\tilde{h}_1\tilde{h}_2(\tilde{h}_1+\tilde{h}_2))\nonumber\\
    \leq &2^{-\epsilon'}K\tilde{h}_1\tilde{h}_2(\tilde{h}_1+\tilde{h}_2)^{1-\epsilon'}+\mathcal{O}(\tilde{h}_1\tilde{h}_2(\tilde{h}_1+\tilde{h}_2)^{1-\epsilon'}).
\end{align}
Now choose $K$ large enough such that the second term is bounded by $(1-2^{-\epsilon'})K\tilde{h}_1\tilde{h}_2(\tilde{h}_1+\tilde{h}_2)^{1-\epsilon'}$.
\end{proof}

\subsection{H\"older continuity of the derivative} \label{holdercd}
Assuming $\varphi$ satisfies the properties in Proposition \ref{prop2.8}, we have the following statement:
\begin{prop}
For $\alpha\in (1,2]$ and $\epsilon>0$, the unstable distribution on $\mathbb{T}^2$ associated to $\varphi$ is $C^{\alpha-\epsilon}$ at the origin.
\end{prop}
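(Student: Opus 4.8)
The plan is to follow the same ``transfer operator'' strategy used for H\"older continuity in \S\ref{holderc} and for differentiability in \S\ref{diff}, now one order higher. We already know from Proposition \ref{propdiff} that $\theta_{E_u}$ is differentiable at the origin along the stable direction, so the goal is to show that $\theta_{E_u}'$ is $(\alpha-1-\epsilon)$-H\"older there. As before, it suffices to prove an invariance statement: if a distribution $v\in V(\delta)$ has a slope function $\theta_v$ whose derivative satisfies a bound of the form $\verts{\theta_v'(y_1)-\theta_v'(y_2)}\leq K\verts{y_1-y_2}^{\alpha-1-\epsilon}$ on an annular region $\epsilon_0\leq\verts{y_i}\leq\epsilon_1$ in the stable direction, then $\mathcal{T}_N v$ satisfies the analogous bound on a region twice as close to the origin; iterating and passing to the limit $\mathcal{T}_{nN}v\to E_u$ (equicontinuity of the difference quotients of the derivatives, exactly as in Corollary \ref{cor2.14}) then gives the conclusion. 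The straightening argument of Remark \ref{rmkholder} reduces the general case to the normal form \ref{fixpoint}--\ref{22}.

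The key computational step is to differentiate the conjugation identity \eqref{2.18}. Writing $\tilde h=\varphi(h)=\tfrac12 h+\mathcal{O}(h^2)$ and using that along $W_s(0)$ the differential has the form \eqref{2.10} with $n=1$, one gets $\theta_{E_u}(\tilde h)=2^{-2}\theta_{E_u}(h)+\psi(h)$ where $\psi$ is as smooth as $\varphi$ (it encodes $\gamma_1$ and the $\mathcal{O}(\verts{h})$ corrections), so in particular $\psi$ is $C^{\alpha-\epsilon}$. Differentiating in $h$ and using $\tilde h'(h)=\tfrac12+\mathcal{O}(h)$ yields
\begin{align}
    \theta_{E_u}'(\tilde h)=2^{-1}\theta_{E_u}'(h)+\psi_1(h)\label{deriv-conj}
\end{align}
with $\psi_1$ again $(\alpha-1-\epsilon)$-H\"older. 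Now I would take two nearby points $\tilde h_1,\tilde h_2$, pull them back to $h_1,h_2$ with $\verts{h_1-h_2}\approx 2\verts{\tilde h_1-\tilde h_2}$, subtract the two instances of \eqref{deriv-conj}, and estimate
\begin{align}
    \verts{\theta_{E_u}'(\tilde h_1)-\theta_{E_u}'(\tilde h_2)}\leq 2^{-1}\verts{\theta_{E_u}'(h_1)-\theta_{E_u}'(h_2)}+\verts{\psi_1(h_1)-\psi_1(h_2)}.
\end{align}
Feeding in the inductive bound $K\verts{h_1-h_2}^{\alpha-1-\epsilon}=K\,2^{\alpha-1-\epsilon}\verts{\tilde h_1-\tilde h_2}^{\alpha-1-\epsilon}$ and the H\"older bound on $\psi_1$, the first term becomes $2^{\alpha-2-\epsilon}K\verts{\tilde h_1-\tilde h_2}^{\alpha-1-\epsilon}$, and since $\alpha\leq 2$ the factor $2^{\alpha-2-\epsilon}<2^{-\epsilon}<1$, which gives the needed contraction; the $\psi_1$ term is absorbed by taking $K$ large, just as at the end of the proof of Lemma \ref{lemdiff}. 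One iteration of $\varphi$ already suffices here (no need for the $[N,2N]$ bookkeeping), because the contraction factor $2^{\alpha-2-\epsilon}$ is strictly less than one for every $\alpha\in(1,2]$.

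The main obstacle is bookkeeping the error term $\psi_1$: one has to check that differentiating the $\mathcal{O}(\verts{h})$ and $\gamma_1(h)$ contributions in \eqref{2.18} really does leave something $(\alpha-1-\epsilon)$-H\"older and not merely bounded, which is where the hypothesis $\alpha\leq 2$ (i.e.\ $\varphi\in C^{\alpha}$ in these coordinates, hence $\psi\in C^\alpha$ and $\psi_1=\psi'\in C^{\alpha-1}$) is used, together with the $\alpha$-bunching that keeps the rate $2^{\alpha-2-\epsilon}$ below $1$; this is exactly the restriction flagged in the remark after Lemma \ref{proofofhc}. Combined with the straightening of Remark \ref{rmkholder} and the fact that $E_u$ is as regular as $\varphi$ in the unstable direction, regularity in both directions gives $C^{\alpha-\epsilon}$ regularity of the distribution near the origin, and reversing time gives the stable distribution; since the origin was an arbitrary point up to the reduction, and $\epsilon>0$ was arbitrary, this yields Theorem \ref{mainthm}.
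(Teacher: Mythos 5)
Your core mechanism is the same as the paper's: differentiate the slope transfer relation \eqref{2.18} and use that the derivative of the slope picks up a factor $\approx 2^{-1}$ while distances along $W_s(0)$ contract by $\approx 2^{-1}$, so an $(\alpha-1-\epsilon)$-H\"older quotient contracts by $2^{\alpha-2-\epsilon}<1$ (the paper runs the same arithmetic with $\varphi^n$, $n\in[N,2N]$, anchored at the origin, cf.\ \eqref{2.24}). However, one step is circular as written: the claim that $\psi$, hence $\psi_1$, is ``as smooth as $\varphi$.'' From \eqref{2.18}, $\psi(h)=\theta_{E_u}(\varphi(h))-2^{-2}\theta_{E_u}(h)$ contains, besides the smooth term $\gamma_1(h)/(2+\mathcal{O}(h))$, the term $\left(\frac{2^{-1}+\mathcal{O}(h)}{2+\mathcal{O}(h)}-2^{-2}\right)\theta_{E_u}(h)=\mathcal{O}(h)\,\theta_{E_u}(h)$, whose regularity is exactly the unknown. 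Differentiating it produces $\mathcal{O}(1)\,\theta_{E_u}(h)+\mathcal{O}(h)\,\theta_{E_u}'(h)$, so the ``H\"older constant of $\psi_1$'' involves $\sup|\theta_{E_u}'|$ and the H\"older variation of $\theta_{E_u}'$ itself; regularity of $\varphi$ alone does not control this. (Also, in the paper $\varphi$ is smooth; the restriction $\alpha\le 2$ enters only through the exponent arithmetic/bunching, not through $\varphi\in C^\alpha$.)

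The gap is repairable, and the repair is essentially what the paper's bookkeeping does: keep the $h$-dependent coefficients explicit. The offending contribution to your two-point estimate has the form $C|h_1-h_2|\sup|\theta_{E_u}'|+C\epsilon_1\,|\theta_{E_u}'(h_1)-\theta_{E_u}'(h_2)|$: the second piece carries the small factor $\mathcal{O}(\epsilon_1)$ and can be absorbed into the margin $1-2^{\alpha-2-\epsilon}$ by shrinking the neighborhood, while the first needs a uniform bound on $\theta_{E_u}'$ near the origin, which must be secured separately. The paper sidesteps all of this by splitting $\theta_{E_u}$ and $\gamma_n$ into linear and nonlinear parts, so every problematic term in \eqref{thetanonlinear} appears with a prefactor $2^{-n}$ or $2^{-n}\mathcal{O}(|y|)$ and only the anchored bound $|\partial_y({}^N\theta_{E_u})(y)|\le K|y|^{\beta}$ is propagated. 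Relatedly, your limiting step needs more than ``exactly as in Corollary \ref{cor2.14}'': if you transfer bounds for general $v\in V(\delta)$ you must justify convergence of the derivatives $\theta_{\mathcal{T}_{nN}v}'\to\theta_{E_u}'$ (e.g.\ Arzel\`a--Ascoli from your uniform bounds), whereas if, as in your displayed computation, you work directly with $\theta_{E_u}$ via invariance, you instead need a base case --- control of $\theta_{E_u}'$ on the outer annulus $\epsilon_0\le|h|\le\epsilon_1$ --- before the one-step contraction can be iterated inward. With those points addressed, your single-step version is a legitimate variant of the paper's $[N,2N]$ scheme.
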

We already proved that the distribution is differentiable. Now we need to prove that the first derivative is $\beta\coloneqq (\alpha-1-\epsilon)$-H\"older continuous at the origin.
\begin{proof}
In the spirit of Lemma \ref{proofofhc} and Proposition \ref{limitofv}, it is enough to prove that for any small $y\in W_s(0)$ if $\verts{\partial_y\theta_{E_u}(y)-\partial_y\theta_{E_u}(0)}\leq K\verts{y}^{\beta}$ then
\begin{align}
    \verts{\partial_z\theta_{E_u}(z)-\partial_z\theta_{E_u}(0)}\leq K\verts{z}^{\beta}\label{prop2.18.1}
\end{align}
where $z=\varphi^{n}(y).$

Assume that $W_s(0)$ and $W_u(0)$ are straightened out as we can do that for a general case. The differentiability of $\theta_{E_u}$ implies that, near origin,
\begin{align}
    \theta_{E_u}(y)=cy+h(y)y\label{taylorexpansion}
\end{align}
for some constant $c$ and a function $h(y)$ that vanishes as $y\to 0$ whence $\partial_y{}^{N}\theta_{E_u}(0)=0$ where ${}^{N}\theta_{E_u}(y)=h(y)y$ is the nonlinear term of $\theta_{E_u}(y)$. Similarly, we can expand $\gamma_n(y)$ as
\begin{align}
    \gamma_n(y)=b_ny+l_n(y)y\label{taylorexpansion2}
\end{align}
where $b_n$ is a constant and $l_n(y)\to 0$ as $y\to 0.$

It is clear that the linear term of $\theta_{E_u}$ satisfies the inequality \eqref{prop2.18.1} for small $y$. Therefore, we just need a bound for the non-linear term. In other words, $\verts{\partial_y{}^N\theta_{E_u}(y)}\leq K\verts{y}^\beta$ implies $\verts{\partial_z{}^N\theta_{E_u}(z)}\leq K\verts{z}^\beta$. 

In fact, the discussion after equation \eqref{coneaction} implies
\begin{align*}
    \theta_{E_u}(\varphi^n y)=(2^{-n}+2^{-2n}\mathcal{O}(\verts{y}))\gamma_n(y)+(2^{-2n}+2^{-n}\mathcal{O}(\verts{y}))\theta_{E_u}(y)
\end{align*}
whence the non-linear term of $\theta_{E_u}(z)$ is
\begin{align}
    ^N\theta_{E_u}(z)=&2^{-n}l_n(y)y+2^{-2n}\mathcal{O}(\verts{y})\gamma_n(y)+2^{-n}\mathcal{O}(\verts{y})cy\nonumber\\
    &+2^{-2n}h(y)y+2^{-n}\mathcal{O}(\verts{y})(h(y)y).\label{thetanonlinear}
\end{align}

Meanwhile, the chain rule implies $\partial_y(^N\theta_{E_u})(z)=\partial_z(^N\theta_{E_u})(z)\partial_y\varphi^{n}(y)$. Therefore,
\begin{align}
    \partial_z(^N\theta_{E_u})(z)=\frac{\partial_y(^N\theta_{E_u})(z)}{\partial_y\varphi^{n}(y)}.\label{nonlinear}
\end{align}

Remember that the mean value theorem implies $\verts{y}\leq L2^n\verts{\varphi^n(y)}.$ Therefore, by assumption,
\begin{align}
    2^{-n}\verts{\partial_y(h(y)y)}&\leq 2^{-n}K\verts{y}^{\beta}\nonumber\\
    &\leq K L^{\beta}2^{n(\beta-1)}\verts{z}^{\beta}\nonumber\\
    &=K L^{\beta}2^{n(\alpha-2-\epsilon)}\verts{z}^{\beta}.\label{2.24}
\end{align}
As $\alpha\in (1,2],$ the power of $2$ is negative. Therefore, we can take $N\in \N$ large enough such that $L^\beta 2^{n(\alpha-2-\epsilon)}\leq \frac{1}{4}$ for all $n\geq N$ which gives a bound in \eqref{nonlinear} corresponding to the last two terms in \eqref{thetanonlinear}. On the other hand, rest of the terms in \eqref{thetanonlinear} contribute only $\mathcal{O}(\verts{y})$ term. Therefore, picking $K$ large enough as in Lemma \ref{proofofhc} so that $2\mathcal{O}(\verts{y})\leq K\verts{z}^\beta$, we get a bound for $n\in [N,2N]$ which implies the proposition after passing through an inductive step.
\end{proof}


\section{Generalization}\label{general}
Now that we have proven regularity in two dimensions for volume preserving diffeomorphisms that fix the origin and whose differential has diagonal entries $2$ and $2^{-1}$, we are ready to comment on the generalization of \S\ref{sec2}. We leave the detail of the proof to the reader.

\subsection{Regularity at a general point}

For any $x_0\in \mathbb{T}^2$, we can find a smooth coordinate map $\chi_{\varphi^{n}(x_0)}:U_{\varphi^{n}(x_0)}~\to \R^2$ in a neighborhood $U_{\varphi^{n}(x_0)}\subset \mathbb{T}^2$ of $\varphi^n (x_0)$ such that $\chi_{\varphi^{n}(x_0)}(\varphi^{n}(x_0))=0$ and the differential of $\chi_{\varphi^{n}(x_0)}$ isometrically sends the tangent spaces $E_u(\varphi^{n}(x_0))$ and $E_s(\varphi^{n}(x_0))$ to the horizontal and vertical coordinate axes of $\R^2$ respectively. Now consider a family of composite maps
\begin{align}
    \psi_{x_0,n}\coloneqq\chi_{\varphi^{n+1}(x_0)}\circ \varphi\circ \chi_{\varphi^{n}(x_0)}^{-1}.
\end{align}
Note that $\psi_{x_0,n}(0)=0.$ Further, $\psi_{x_0,n}$ is volume preserving and $d\psi_{x_0,n}(0)$ has diagonal entries $\eta$ and $\eta^{-1}$. After straightening out the stable and unstable manifolds associated to $\psi_{x_0,n}$ if necessary, \S\ref{sec2} implies that the unstable (stable) distribution associated to $\psi_{x_0,n}$ is regular. Note that an unstable manifold associated to $\varphi$ is the pullback of an unstable manifold of $\psi_{x_0,0}$ using $\chi_{x_0}.$ Now the smoothness of the coordinate maps implies regularity of the distribution associated to $\varphi.$

\subsection{Regularity when diagonal entries vary}
In section \S\ref{sec2}, we never used the power of $\alpha$-bunching, \eqref{negiterate} and \eqref{positerate} since the diagonal entries of $d\varphi^n(0)$ were fixed which implies bunching. However, we can work with diagonal entries of $d\varphi^n(0)$: $\eta_n<1$ and $\eta_n^{-1}$ that vary with $n$ but are bounded to satisfy the bunching condition. In other words, for each $x\in \mathbb{T}^2$, there exist positive constants $0<\kappa<\lambda<1$ such that $\kappa^n\leq \eta_n\leq \lambda^n$ and $\sup_{x\in M}\lambda^2\kappa^{-\alpha}<1.$

Some minor modifications in the proof are:
\begin{itemize}
    \item \textbf{H\"older continuity:} We replace $2^{-n}$ with $\lambda^{n}$ in the bound \eqref{actionbound}. Further, using the mean value theorem, $\verts{y}\leq L\eta_n^{-1}\verts{\varphi^n(y)}\leq L\kappa^{-n}\verts{\varphi^n(y)}.$ Therefore, the inequality \eqref{boundtransform} becomes
    \begin{align}
        \norm{(\mathcal{T}_nv)(z)}\leq P(n)L\lambda^n\kappa^{-n}\verts{z}+(\lambda^{2}\kappa^{-\alpha})^n\kappa^{\epsilon n}KL^{\alpha-\epsilon}\verts{z}^{\alpha-\epsilon}.\label{boundtrans2}
    \end{align}
    Choose $N\in \N$ large such that $L^{\alpha-\epsilon}\kappa^{\epsilon n}\leq \frac{1}{2}$ for $n\geq N$. And pick $K$ larger than $LP(n)\lambda^n\kappa^{-n}$ for $n\in [N,2N].$ With bunching condition, we get the H\"older continuity as the estimate for $\gamma_n(y)$ is still $P(n)\verts{y}$.
    
    \item \textbf{Differentiability:} In the proof of Lemma \ref{lemdiff}, we replace $2^{-1}$ with $\eta_1$.
    
    \item \textbf{H\"older continuity of the derivative:} The changes are same to that for H\"older continuity. In fact, $(\lambda^2\kappa^{-\alpha})^n$ will appear in the inequality \eqref{2.24} where we have to use bunching. Finally, we have to take $\lambda^n\kappa^{-n}$ into account while choosing a large $K$.
\end{itemize}

\subsection{Regularity in higher dimensions}\label{sec3.3}
The idea of the proof in higher dimensions is same with justification of some ingredients that we took for granted in two dimensions.

Like we have been doing, we can straighten out the unstable (stable) manifolds and work with Anosov diffeomorphisms $\varphi$ that fix the origin. In this setting, for any $y$ in stable manifold close to origin, the differential of $\varphi^n$ is
\begin{align*}
    d\varphi^n(y)=\begin{pmatrix}A_n&0\\\Gamma_n&B_n\end{pmatrix}
\end{align*}
where $A_n$ and $B_n$ are matrices corresponding to the unstable and stable directions such that $\Gamma_n(0)=0$, $\norm{A_n^{-1}(0)}\leq L\lambda^n$ and $\norm{B_n(0)}\leq L\lambda^n.$

\begin{itemize}
    \item \textbf{H\"older continuity:} In contrast to lines in $\mathbb{T}^2$, the distributions have higher dimensions. However, we can think of an unstable (stable) distribution at each point as a linear map $T: \R^{d_u}\to \R^{d_s}$ where $d_u$ and $d_s$ are the dimensions of the stable and unstable manifolds. Instead of slope function, we can use $\begin{pmatrix}\mathds{1}\\T\end{pmatrix}$ to characterize $T$ where $\mathds{1}$ is the identity. Then, for small $y\in W_s(0)$, \eqref{slope} becomes
    \begin{align*}
        \norm{(\mathcal{T}_nv)(z)}\leq \norm{\Gamma_n(y)}\norm{A_n^{-t}(y)}+\norm{B_n(y)}\norm{T(y)}\norm{A_n^{-1}(y)}.
    \end{align*}
    The proof of the bound $\norm{\Gamma_n(y)}\leq P(n)\norm{y}$ is similar to that for $\gamma_n$. However, we can't take 
    \begin{subequations}
    \begin{align}
        \label{3.2}
        \norm{A_n^{-1}(y)}&\leq L \lambda^n\\
        \label{3.1}
        \norm{B_n(y)}&\leq L\lambda^n\\
        \label{3.3}
        \norm{y}&\leq L\kappa^{-n}\norm{\varphi^n(y)}
    \end{align}
    \end{subequations}
    for granted because norms don't have to work like numbers.
    
    Note that \eqref{3.2} and \eqref{3.1} are similar. The third one follows from the mean value theorem and a similar bound for $\norm{B_n^{-1}(y)}.$ Therefore, it suffices to prove the second one. Meanwhile, it is enough to show that there exists $R>1$ such that
    \begin{align}
        \norm{B_n(y)-B_n(0)}\leq R\lambda^n\norm{y}.\label{boundb}
    \end{align} 
    
    \begin{proof}
    We proceed by induction: We can find $R'>1$ such that
    \begin{align*}
        \norm{B_1(y)-B_1(0)}\leq R'\norm{y}
    \end{align*}
    for a constant $R'$ as $B_1$ is regular and $\norm{B_1(0)}\leq R'\lambda^n$. Because $\varphi$ is contracting, we know that $\norm{\varphi^n y}\leq R' \omega^n\norm{y}$ for some $\omega<1.$ Assume that the statement is true for all $i\leq n$.
    
    Using $d\varphi^{n+m}(y)=d\varphi^n(\varphi^m(y))d\varphi^m(y),$ it is clear that
    \begin{align*}
        B_{n+1}(y)-B_{n+1}(0)=B_{n}(\varphi(y))B_1(y)-B_n(0)B_1(0)
    \end{align*}
    To use the induction argument, first note that the right hand side can be written as a telescoping sum:
   \begin{align*}
       &B_n(\varphi(y))\parens*{B_1(y)-B_1(0)}\\
       &+\sum_{j=1}^n B_{n-j}(\varphi^{j+1}(y))\parens*{B_1(\varphi^{j}(y))-B_1(0)}B_j(0)\\
       &+(B_1(\varphi^n(y))-B_1(\varphi^n(0)))B_n(0).
   \end{align*}
    The first term in this telescoping sum can be bounded using the base case. The bound for third term follows from the base case and $\norm{\varphi^n y}\leq R' \omega^n\norm{y}$. For the second term, note that
    \begin{align*}
        \norm{B_i(\varphi^my)}&\leq \norm{B_i(0)}+\norm{B_i(\varphi^my)-B_i{0}}\\
        &\leq R'\lambda^n+R'\lambda^n\norm{\varphi^m(y)}\\
        &\leq R'\lambda^n(1+R\omega^m\verts{y}).
    \end{align*}
    Therefore, using the induction argument and the preceding, it is easy to see that the claim follows  for $R>\frac{2R'^4}{\lambda(1-\omega)}$.
    \end{proof}
    Now that we have justified the bound in \eqref{3.2}, \eqref{3.1} and \eqref{3.3}, the proof of H\"older continuity follows easily as in two dimensions with varying diagonal entries.
    
    \item \textbf{Differentiablity:} For a function $f:\R^n\to 
\R$ to be differentiable, instead of \eqref{differentiation}, we use the criterion
    \begin{align*}
        \frac{1}{h_1h_2h_2}\verts{h_2h_3f(c_1(h_1))+h_1h_3f(c_2(h_2))+h_1h_2f(c_3(h_3))\\-(h_1h_2+h_1h_3+h_1h_2)f(x)}\to 0
    \end{align*}
    as $h_1,h_2,h_3\to 0$ for geodesics $c_i$ on $\R^n$ starting at $x$ with initial direction $v_i$ such that $\sum v_i=0.$
    
    Just like in the proof of H\"older continuity, we use $T$ in the form $\begin{pmatrix}\mathds{1}\\T\end{pmatrix}$ instead of the slope function. Remember $E_u(0)$ is one dimensional in two dimensions, so the geodesics $c_i(h_i)$ and $\tilde{c}_i(\tilde{h}_i)$ (to be made precise soon) can be replaced with of $h_i$ and $\tilde{h}_i.$ However, we have to use geodesics in the arguments of $\theta_{E_u}$ in \eqref{2.18} as we could start pointing in any direction.
    
    Because we are interested in the direction of the geodesics, define $\mathcal{E}_{s}$ to the collection of $(v_1,v_2,v_3)$ such that $v_i\in E_s,$ $\sum v_i=0$ and $\sum \verts{v}_i=1.$ For $x\in M$, define the action of $\varphi$ on $\mathcal{E}_s$ as:
    \begin{align}
    (\mathcal{T}_nv_i)(x)\coloneqq \frac{d\varphi^n(p)(v_i(p))}{r_n}
    \end{align}
    where $p=\varphi^{-n}(x)$ and $r_n$ is a normalization factor. Note that \eqref{positerate} implies $\frac{1}{L}\kappa^n\leq r_n \leq L\lambda^n.$ Using $\tilde{h}_i=r_nh_i$ instead of $\varphi^n((-1)^ih_i)$ in Lemma \ref{lemdiff}, we suppose $\tilde{c}_i(\tilde{h}_i)$ starts in the direction $(\mathcal{T}_nv_i)(x).$
    
    However, to use the arguments in Lemma \ref{lemdiff}, we use $\varphi^n(\bar{c}_i(h_i))$ where $\bar{c}_i$ starts in the direction $v_i(\varphi^{-n}(x))$. Since $\tilde{c}_i$ and $\varphi^n(\bar{c}_i)$ are tangent at $0,$ the error in using $\varphi^n(\bar{c}_i)$ instead of $\tilde{c}_i$ is of order $o(\tilde{h}_1\tilde{h}_2\tilde{h}_3).$
    
    \item \textbf{H\"older continuity of the derivative:} In addition to expanding $\Gamma_n$ and $T$ (the `slope' function in higher dimensions) after writing it in the form $\begin{pmatrix}\mathds{1}\\T\end{pmatrix}$, we expand $A_n$ and $B_n$ into linear and non-linear terms. But the proof is similar with extra terms to carry around.
\end{itemize}

\subsection{Regularity for Anosov flows}
In this section, we will briefly comment on how to prove regularity for volume preserving Anosov flows in three dimensions. 

\begin{defn}
A volume preserving Anosov flow on a compact manifold $M$ is a $C^\infty$ flow $\varphi^t: M\to M$ that preserves the volume of $M$ such that $\dot{\varphi}^t\neq 0$ and the tangent bundle $TM$ splits into $E_u\oplus E_s\oplus E_{\varphi^t}$ where $E_{\varphi^t}$ is the span of $\dot{\varphi}^t$ and $E_u$ and $E_s$ are $d_u$ and $d_s$ dimensional subspaces of $TM$. Further, $E_u$ and $E_s$ are invariant under the flow and contracting as in Definition \ref{anosovdiff} with $n\in \N$ replaced with $t>0$.
\end{defn}
The discussion in \S\ref{sec2} already implies regularity of stable/unstable distributions associated to the flow at periodic points when $M$ is three dimensional. In fact, if $\varphi^{t_0}(x)=x$ for some time $t_0$ then define $\phi\coloneqq \psi^{t_0}$ and carry on the argument for $\phi.$

In general, we work with weak unstable manifold $W_{wu}$ corresponding to $E_u\oplus E_{\varphi^t}$. The smoothness of weak manifolds are guaranteed but not that of strong manifolds associated to just $E_u$ and $E_s$ \cite{J72}. To avoid the flow direction, we pass our arguments to $W_u\coloneqq W_{wu}(x)\cap \mathcal{F}$ where $\mathcal{F}(x)$ for $x\in M$ is a submanifold of $M$ transverse to $\dot{\varphi}^t(x).$ $\mathcal{F}(x)$ is often called Poincar\'e section. Now the proof given in \S\ref{sec3.3} works for $W_u$ after replacing the discrete time $n$ with continuous time $t.$ As the flow direction is smooth, we get regularity of $W_{wu}.$ A small modification that we have to make is in the proof of the claim \eqref{boundb}. To prove \eqref{boundb} for all $t>0$, we have to change the base case to $t<1$ instead of just $n=1.$

\section*{Acknowledgement}
The project was supported by Paul E. Gray UROP fund. I am thankful to Semyon Dyatlov for his guidance throughout the project. And I am grateful to my parents.

\bibliography{main}

\textit{E-mail:} \href{mailto:rkoirala@mit.edu}{rkoirala@mit.edu}

\scshape{Department of Mathematics, Massachusetts Institute of Technology, MA 02139}

\end{document}